\documentclass[12pt,a4paper]{article}
\usepackage[T1]{fontenc}
\usepackage[utf8]{inputenc}
\usepackage[british]{babel}
\usepackage{amsmath,amssymb,amsthm}
\usepackage{wasysym}
\usepackage{tensor}
\usepackage{xcolor}
\usepackage[all]{xy}
\usepackage{cite}

\usepackage[shortlabels]{enumitem}
\setlist[enumerate]{leftmargin=*}

\usepackage{hyperref}

\newcommand{\gen}[1]{\left\langle #1 \right\rangle}
\makeatletter
\newcommand{\leqnomode}{\tagsleft@true\let\veqno\@@leqno}
\newcommand{\reqnomode}{\tagsleft@false\let\veqno\@@eqno}
\makeatother

\DeclareMathOperator\End{End}
\newcommand\tsba[1]{{#1}_{{\oplus}, {\odot}}}
\newcommand\ZLV{ZLV}

\def\Z{\mathbb{Z}}

\newcommand\C{\mathbb{C}} 
  
\newcommand\F{\mathbb{F}}

\newcommand\x{\times}

\newcommand\Aut{\operatorname{Aut}}
\newcommand\GL{\operatorname{GL}}
\newcommand\PGL{\operatorname{PGL}}
\newcommand\Core{\operatorname{Core}}

\newcommand\fL{\mathsf{L}} 

\newtheorem{teo}{Theorem}[section]
\newtheorem{prop}[teo]{Proposition}

\theoremstyle{definition}
\newtheorem{defi}[teo]{Definition}

\theoremstyle{remark}
\newtheorem{remark}[teo]{Remark}

\theoremstyle{definition}
\newtheorem{ex}[teo]{Example}

\definecolor{White}{RGB}{250,250,250}
\numberwithin{equation}{section}
\title{Representations of finite skew braces}
\author{A. Ballester-Bolinches \and R. Esteban-Romero \and P. P{\'e}rez-Altarriba\thanks{Departament de Matem\`atiques, Universitat de Val\`encia; Dr.\ Moliner, 50; 46100 Burjassot, Val\`encia, Spain; \href{mailto:Adolfo.Ballester@uv.es}{\texttt{Adolfo.Ballester@uv.es}}, \href{mailto:Ramon.Esteban@uv.es}{\texttt{Ramon.Esteban@uv.es}}, \href{mailto:Pedro.A.Perez@uv.es}{\texttt{Pedro.A.Perez@uv.es}}; ORCID 0000-0002-2051-9075, 0000-0002-2321-8139, 0009-0009-7082-9002}}
\begin{document}
\maketitle
\begin{abstract}
  One of the classical open problems in the theory of skew left braces is the study of their representation theory. We propose in this paper a definition of representation of a skew left brace and study its properties. Representations of the trifactorised groups associated with skew left braces play a fundamental role. 

  \emph{Mathematics Subject Classification (2020):}
  16T25, % Yang-Baxter equations 
  81R50, % Quantum groups and related algebraic methods applied to problems in quantum theory
  20C35, % Application of group representations to physics and other areas of science
  20C99, % None of the above, but in this section (20Cxx: Representation theory of groups)
  20D40.  % Products of subgroups of abstract finite groups

  \emph{Keywords:} skew left brace, representation, trifactorised group.
\end{abstract}

\section{Introduction}
The concept of \emph{skew left brace} was introduced by Guarnieri and Vendramin \cite{GuarnieriVendramin17} as a triple $(B, {+}, {\cdot})$, where $(B, {+})$ and $(B, {\cdot})$ are groups linked by the modified distributive law $a(b+c)=ab-a+ac$ for all $a$, $b$, $c\in B$. This notion generalises the \emph{left braces} introduced by Rump \cite{Rump07} as a generalisation of Jacobson radical rings, corresponding to the case in which $(B, {+})$ is abelian, although this structure seems to be already known by Kurosh (see \cite[Chapter~10]{Kurosh74}). Skew left braces constitute the precise framework to study non-degenerate bijective set-theoretic solutions of the Yang-Baxter equation, a fundamental equation of the mathematical physics introduced independently by Yang \cite{Yang67} and Baxter \cite{Baxter73}.

Representation theory is the branch of mathematics that studies algebraic structures by expressing their elements as linear transformations of vector spaces, corresponding with matrices in the case of vector spaces of finite dimension, and by identifying the operations in the algebraic structure with suitable operations with the linear transformations or the corresponding matrices. In the case of representations of groups, the elements of the group are expressed by invertible linear transformations or regular matrices and the operation is identified with the composition of linear transformations or the product of matrices. A group representation of a group $G$ with representation space~$V$ is then defined as a group homomorphism $\rho\colon G\longrightarrow \GL(V)$. Since linear algebra and matrix groups are quite well understood, representation theory of groups has become a very powerful tool in group theory.

One of the open problems in the theory of skew left braces is precisely the study of representations of skew left braces (see, for instance, \cite[Problem~2.26]{Vendramin19-agta}). As both operations of a skew left brace correspond to  group operations, it seems natural to encode the elements of the skew left brace by means of invertible linear maps or regular matrices. However, \emph{a priori}, there is no canonical skew left brace structure on a group of regular matrices, where we only have one operation, namely, the product of matrices. It seems natural to define a representation of a skew left brace~$B$ with representation space~$V$ as a map $\rho\colon B\longrightarrow \GL(V)$ for which the image $\rho(B)$ acquires a structure of skew left brace, that must be determined, and the restriction of~$\rho$ to its image becomes a homomorphism of skew left braces. In the case of a group representation $\rho\colon G\longrightarrow \GL(V)$, since $\rho$ is a group homomorphism, we have that its kernel is a normal subgroup of~$G$. Analogously, since the kernel of a homomorphism between two skew left braces is an ideal of the first one, we must expect the kernel of a representation of a skew left brace~$B$ to be an ideal of~$B$. This motivates the proposal we present in Section~\ref{sec-Br-rep}.

It is possible to associate to skew left braces some types of trifactorised groups, as shown in \cite{Sysak11-Ischia10,BallesterEsteban22,BallesterEstebanPerezAPerezC25-arXiv-categoriesskewleftbraces}, and to describe substructures and morphisms of skew left braces in terms of suitable substructures and morphisms of the corresponding trifactorised groups. As trifactorised group morphisms are just group homomorphisms satisfying some additional conditions, we will define in Section~\ref{sec-3fact-rep} a trifactorised group representation of a trifactorised group $(G, K, H, E)$ with representation space $V$ as a group homomorphism $\rho\colon G\longrightarrow \rho(V)$ for which the kernel corresponds to an ideal of the associated skew left brace. We will see in Section~\ref{sec-ind-3fact-rep} how to pass from a representation of a finite skew left brace a representation of the corresponding trifactorised group.

There have been other attempts in the mathematical literature to define representations of skew left braces. We will pay attention to the definition of skew brace representation given by Zhu in \cite[Remark~3.3]{Zhu22} for skew left braces with abelian additive group and extended in the natural way to skew left braces by Letourmy and Vendramin in~\cite[Definition~4.1]{LetourmyVendramin24}. In this setting, a representation of a skew left brace $B$ with representation space $V$, called here a \emph{\ZLV-representation} to avoid confusions, consists a pair of group homomorphisms $(B, {+})\longrightarrow \GL(V)$ and $(B, {\cdot})\longrightarrow \GL(V)$ satisfying a compatibility condition. We analyse this definition in Section~\ref{sec-ZLV-rep} and compare it with our definitions. We note that the kernel of the homomorphism corresponding to the additive group of a \ZLV-representation is not, in general, an ideal of the skew left brace. We will conclude in Proposition~\ref{prop-ZLV-large} that the \ZLV-representations of a skew left brace~$B$ are representations of the semidirect product $G=[(B, {+})](B, {\cdot})$ of the additive group and the multiplicative group of $B$ via the lambda action.

\section{Representations of skew left braces}\label{sec-Br-rep}
Let $(B, {+}, {\cdot})$ be a skew left brace with additive group $K=(B, {+})$ and multiplicative group $C=(B, {\cdot})$. We recall (see, for instance, \cite{BallesterEsteban22}) that for every $a\in B$, we can define $\lambda_a\in \Aut(K)$ by $\lambda_a(b)=-a+ab$ and so $\lambda\colon C\longrightarrow \Aut(K)$ given by $\lambda(c)=\lambda_c$ for every $c\in C$ is a group homomorphism, called the \emph{lambda action} or \emph{lambda map}. Moreover, the identity map $\delta\colon C\longrightarrow K$ is a derivation or $1$-cocycle with respect to~$\lambda$. We note that a skew left brace is univocally determined by one of the operations and the lambda action. 

A \emph{subbrace} of $B$ is a subgroup of $K$ which is also a subgroup of $C$. A subbrace $L$ of $B$ is said to be a \emph{strong left ideal} (respectively \emph{right ideal}) of $B$ if $L$ is a $\lambda$-invariant normal subgroup of $K$. An \emph{ideal} of $B$ is a strong left ideal which is a normal subgroup of $C$.

To approach the notion of representation of a skew left brace, we begin with a group representation $\rho\colon K\longrightarrow \GL(V)$ of the additive group~$K$. Since we want to define a skew left brace structure on $\rho(K)$, the first thing we need is to find a necessary and sufficient condition to have a lambda action on $\rho(K)$ compatible with the lambda action of $B$.

\begin{prop}\label{prop-br-semirep-equi}
  Let $B$ be a skew left brace with additive group $K$, multiplicative group~$C$, and lambda action $\lambda$. Let $\rho\colon K\longrightarrow\GL(V)$ be a group representation. Then the following statements are equivalent:
\begin{enumerate}
\item There exists a map $\beta\colon C\longrightarrow\Aut(\rho(K))$ such that $\rho\circ\lambda_c=\beta_c\circ\rho$ for all $c\in C$, where $\beta_c=\beta(c)$ for all $c\in C$.\label{en-semirep-equi-1}
\item $\ker\rho$ is a strong left ideal of~$B$.\label{en-semirep-equi-2}
\end{enumerate}
In this case, the map $\beta$ is a group homomorphism and it is uniquely determined by~$\rho$.
\end{prop}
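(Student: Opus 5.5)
We prove the two implications separately and then deal with the homomorphism property and uniqueness, using only that $\rho$ is a group homomorphism of $K$ and that $\lambda$ is a homomorphism $C\to\Aut(K)$. Put $N=\ker\rho$, which is a normal subgroup of $K$ because $\rho$ is a group homomorphism. By the definition recalled above, $N$ is a strong left ideal exactly when it is a $\lambda$-invariant normal subgroup of $K$. (Such a subgroup is automatically a subbrace: for $a,b\in N$ we have $ab=a+\lambda_a(b)\in N$, and $a^{-1}=-\lambda_{a^{-1}}(a)\in N$.) So in both directions the only real content is $\lambda$-invariance of $N$.

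\emph{(\ref{en-semirep-equi-1}) implies (\ref{en-semirep-equi-2}).} Let $n\in N$ and $c\in C$. Then
\[
\rho(\lambda_c(n))=\beta_c(\rho(n))=\beta_c(1)=1,
\]
because $\beta_c$ is a group automorphism of $\rho(K)$. Hence $\lambda_c(N)\subseteq N$ for every $c\in C$, so $N$ is $\lambda$-invariant. Together with the remark above, $N$ is a strong left ideal.

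\emph{(\ref{en-semirep-equi-2}) implies (\ref{en-semirep-equi-1}).} For $c\in C$ define $\beta_c(\rho(k))=\rho(\lambda_c(k))$ for $k\in K$.
\begin{itemize}
\item \emph{Well defined.} If $\rho(k)=\rho(k')$, then $-k'+k\in N$. By $\lambda$-invariance, $\lambda_c(-k'+k)=-\lambda_c(k')+\lambda_c(k)\in N$, so $\rho(\lambda_c(k))=\rho(\lambda_c(k'))$. This is the only step that uses the hypothesis, and it is the heart of the argument.
\item \emph{Endomorphism.} $\beta_c$ is an endomorphism of $\rho(K)$ because $\lambda_c$ and $\rho$ are homomorphisms.
\item \emph{Bijective.} $\beta_c$ is surjective because $\lambda_c$ is surjective. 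Since $\beta_{c^{-1}}$ is defined in the same way, it is a two-sided inverse of $\beta_c$: we have $\lambda_c\lambda_{c^{-1}}=\mathrm{id}$, so $\beta_c\beta_{c^{-1}}(\rho(k))=\rho(\lambda_c\lambda_{c^{-1}}(k))=\rho(k)$, and symmetrically. Thus $\beta_c\in\Aut(\rho(K))$.
\end{itemize}
By construction $\rho\circ\lambda_c=\beta_c\circ\rho$.

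\emph{Uniqueness and the homomorphism property.} Since $\rho\colon K\to\rho(K)$ is surjective, the relation $\beta_c\circ\rho=\rho\circ\lambda_c$ determines $\beta_c$ on all of $\rho(K)$, so $\beta$ is unique. For the homomorphism property, for $c,d\in C$ and $k\in K$ we compute
\[
\beta_{cd}(\rho(k))=\rho(\lambda_{cd}(k))=\rho(\lambda_c(\lambda_d(k)))=\beta_c(\rho(\lambda_d(k)))=\beta_c\beta_d(\rho(k)),
\]
using that $\lambda$ is a homomorphism. Hence $\beta_{cd}=\beta_c\beta_d$ on $\rho(K)$, and $\beta$ is a group homomorphism.
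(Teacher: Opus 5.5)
Your proof is correct and follows the same route as the paper's. Both arguments get $\lambda$-invariance of $\ker\rho$ from $\beta_c(1)=1$, check that $\beta_c(\rho(k))=\rho(\lambda_c(k))$ is well defined using $\lambda$-invariance, and derive uniqueness and the homomorphism property from the relation $\beta_c\circ\rho=\rho\circ\lambda_c$. Beyond the paper, you also verify two points it leaves implicit: a $\lambda$-invariant normal subgroup of $K$ is automatically a subbrace, and each $\beta_c$ is actually an automorphism of $\rho(K)$.
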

\begin{proof}
\emph{\ref{en-semirep-equi-1} implies~\ref{en-semirep-equi-2}}. Suppose that there exists $\beta\colon C\longrightarrow\Aut(\rho(K))$ such that $\rho\circ\lambda_c=\beta_c\circ\rho$ for all $c\in C$. Since $\ker\rho$ is a normal subgroup of $K$, it is enough to check that $\ker\rho$ is invariant under the action of $\lambda_c$ for all $c\in C$. Let $k\in \ker\rho$. Then $\rho(\lambda_c(k))=\beta_c(\rho(k))=\beta_c(1)=1$. This implies that $\lambda_c(k)\in\ker\rho$. 

\emph{\ref{en-semirep-equi-2} implies~\ref{en-semirep-equi-1}}. Conversely, suppose that $\ker\rho$ is a strong left ideal of~$B$. Let $c\in C$. If $\rho(k)=\rho(k')$, then $k'=k+x$ with $x\in \ker\rho$. Since $\ker\rho$ is a strong left ideal of~$B$, $\lambda_c(x)\in \ker\rho$ and $\rho(\lambda_c(x))=1$. Hence, $\rho(\lambda_c(k'))=\rho(\lambda_c(k+x))=\rho(\lambda_c(k))\rho(\lambda_c(x))=\rho(\lambda_c(k))$. Consequently, we can define $\beta_c\colon \rho(K)\longrightarrow\rho(K)$ by means of $\beta_c(\rho(k))=\rho(\lambda_c(k))$.

Given $c$, $d\in C$ and $k\in K$, $\beta_{cd}(\rho(k))=\rho(\lambda_{cd}(k))=\rho(\lambda_c(\lambda_d(k)))=\beta_c(\rho(\lambda_d(k))=\beta_c(\beta_d(\rho(k)))$. It follows that $\beta_{cd}=\beta_c\circ \beta_d$ and so $\beta\colon C\longrightarrow\Aut(\rho(K))$ is a group homomorphism. The unicity follows directly from the condition $\rho\circ\lambda_c=\beta_c\circ\rho$ for all $c\in C$.
\end{proof}

\begin{defi}\label{defi-br-semi-rep}
Consider a group representation $\rho\colon K\longrightarrow\GL(V)$. We say that $\rho$ is a \emph{brace semi-representation} of $B$ when $\ker\rho$ is a strong left ideal of~$B$. The group homomorphism $\beta\colon C\longrightarrow\Aut(\rho(K))$ defined in Proposition~\ref{prop-br-semirep-equi} will be called the \emph{beta map} of the semi-representation and the vector space~$V$ will be called the \emph{representation space} of~$\rho$.
\end{defi}

Not every group representation is brace semi-representation.
\begin{ex}\label{ex-grp-rep-not-br-semi-rep}
  Let us consider the skew left brace $B$ with additive group $K=\langle a, b \mid 2a=2b=0, a+b=b+a\rangle$, multiplicative group $C=\langle c\mid c^4=1\rangle$, lambda action given by $\lambda_c(a)=a+b$, $\lambda_c(b)=b$, and bijective derivation $\delta\colon C\longrightarrow K$ given by $\delta(1)=0$, $\delta(c)=a$, $\delta(c^2)=b$, $\delta(c^3)=a+b$. We can consider the group representation $\rho\colon K\longrightarrow\C$ given by $\rho(a)=1$, $\rho(b)=-1$. It follows that $\ker\rho=\langle a\rangle$, that is not a left ideal because it is not lambda-invariant. Therefore, $\rho$ is not a brace semi-representation.
\end{ex}

Let $\rho\colon K\longrightarrow\GL(V)$ be a brace semi-representation of~$B$. In $\rho(K)$, we have an operation and the beta map, which bears a resemblance to a lambda map, but, in general, we do not have a skew left brace structure. The natural way to define a second operation in $\rho(K)$ would be
\begin{equation}\label{eq-prod-br-rep}
\rho(\delta(c))\boxdot\rho(\delta(c'))=\rho(\delta(c))\beta_c(\rho(\delta(c'))),
\end{equation}
where $\delta\colon C\longrightarrow K$ is the identity map and $\beta$ is the beta map. Unfortunately, there could be $c$, $\hat c\in C$ such that $\rho(\delta(c))=\rho(\delta(\hat c))$, but $\beta_c\ne \beta_{\hat c}$. To solve this problem we need to add another condition.

\begin{prop}\label{prop-br-rep-equi}
Consider a brace semi-representation $\rho\colon K\longrightarrow\GL(V)$ of~$B$ with beta map $\beta\colon C\longrightarrow\Aut(\rho(K))$. Then the following statements are equivalent:
\begin{enumerate}
\item $\ker\rho\subseteq\delta(\ker\beta)$.\label{en-rep-id-1}
\item If $c$, $\hat c\in C$ and $\rho(c)=\rho(\hat c)$, then $\beta_c=\beta_{\hat c}$.\label{en-rep-id-2}  
\item $\ker\rho$ is an ideal of~$B$.\label{en-rep-id-3}
\end{enumerate}

If one of these statements holds, then $(\rho(K),{\cdot}, {\boxdot})$ is a skew left brace isomorphic to $B/{\ker\rho}$ and its lambda action $\hat\beta\colon (\rho(K),\boxdot)\longrightarrow\Aut(\rho(K))$ is given by $\hat\beta(\rho(\delta(c)))=\beta_c$.
\end{prop}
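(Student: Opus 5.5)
I will prove the cycle \ref{en-rep-id-1}$\Rightarrow$\ref{en-rep-id-2}$\Rightarrow$\ref{en-rep-id-3}$\Rightarrow$\ref{en-rep-id-1}. Throughout I identify $K$ and $C$ with the same set $B$ via $\delta$, and use that $\rho$ is a semi-representation, so $\ker\rho$ is a strong left ideal and $\beta$ is a homomorphism by Proposition~\ref{prop-br-semirep-equi}. Note that $\ker\beta=\{c\in C: \rho(\lambda_c(k))=\rho(k)\text{ for all }k\in K\}$.

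\emph{\ref{en-rep-id-1} implies~\ref{en-rep-id-2}.} Suppose $\rho(c)=\rho(\hat c)$. I write $\hat c=c\cdot x$ in $C$, so that $\delta(\hat c)=c+\lambda_c(x)$. Then $\rho(\lambda_c(x))=1$. Since $\ker\rho$ is $\lambda$-invariant, this gives $x\in\ker\rho\subseteq\delta(\ker\beta)$, so $\beta_x=\mathrm{id}$ and hence $\beta_{\hat c}=\beta_c\beta_x=\beta_c$.

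\emph{\ref{en-rep-id-2} implies~\ref{en-rep-id-3}.} It remains to show that $\ker\rho$ is a normal subgroup of $C$; it is already a strong left ideal, hence a left ideal and a subgroup of $C$. Take $x\in\ker\rho$ and $c\in C$. Since $\rho(x)=\rho(1)$, condition \ref{en-rep-id-2} gives $\beta_x=\mathrm{id}$. I would use the standard identity
\[
c x c^{-1}=c+\lambda_c\bigl(x+\lambda_x(c^{-1})\bigr)=c+\lambda_c(x)+\lambda_{cx}(c^{-1}),
\]
which follows from $ab=a+\lambda_a(b)$. Applying $\rho$ and using $\beta_{cx}=\beta_c\beta_x=\beta_c$, I get
\[
\rho(cxc^{-1})=\rho(c)\,\rho(\lambda_c(x))\,\rho(\lambda_c(c^{-1}))=\rho(c)\,\rho(\lambda_c(c^{-1}))=\rho(c c^{-1})=1.
\]

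\emph{\ref{en-rep-id-3} implies~\ref{en-rep-id-1}.} Let $x\in\ker\rho$ and $k\in K$. Because $\ker\rho$ is an ideal, $\lambda_x(k)-k\in\ker\rho$. I would verify this through $\lambda_x(k)=-x+xk$ together with $xk=k\cdot(k^{-1}xk)$, so that $xk\equiv k$ modulo the ideal; this is the usual fact that $B/\ker\rho$ has trivial lambda action coming from elements of the ideal. Hence $\beta_x(\rho(k))=\rho(k)$, so $x\in\ker\beta$.

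For the final assertion, condition \ref{en-rep-id-2} makes both $\boxdot$ and $\hat\beta$ well defined on $\rho(K)$. The map $\bar\rho\colon B/\ker\rho\to\rho(K)$, $k+\ker\rho\mapsto\rho(k)$, is a group isomorphism of additive groups. By \eqref{eq-prod-br-rep},
\[
\rho(c)\boxdot\rho(c')=\rho(c)\,\rho(\lambda_c(c'))=\rho(c+\lambda_c(c'))=\rho(cc'),
\]
so $\bar\rho$ also transports the multiplication. Hence $(\rho(K),\cdot,\boxdot)$ is a skew left brace isomorphic to the quotient brace $B/\ker\rho$. Its lambda map is $\rho(c)\mapsto(\rho(c))^{-1}(\rho(c)\boxdot\,\cdot\,)=\beta_c$, as claimed. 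The main obstacle is the conjugation identity in the second step: I must make sure the $\lambda$-terms collapse correctly, which is where $\beta_{cx}=\beta_c$ is used.
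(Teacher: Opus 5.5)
Your proof is correct. Your steps \ref{en-rep-id-1}$\Rightarrow$\ref{en-rep-id-2} and \ref{en-rep-id-3}$\Rightarrow$\ref{en-rep-id-1} are the paper's arguments. Your $x$ is the paper's $c^{-1}\hat c$. In the last step you prove directly the fact $\lambda_x(k)-k\in\ker\rho$, which the paper instead quotes from \cite[Lemma~3.7]{BallesterEsteban22}.

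The genuine difference is in \ref{en-rep-id-2}$\Rightarrow$\ref{en-rep-id-3}. The paper argues structurally. Condition \ref{en-rep-id-2} makes $\boxdot$ well defined. The paper then declares $(\rho(K),\cdot,\boxdot)$ a skew left brace and $\rho$ a brace epimorphism, so $\ker\rho$ is an ideal because kernels of brace homomorphisms are. Implicitly this rests on the identity $\rho(c)\boxdot\rho(c')=\rho(cc')$. That identity makes $\rho\colon C\to(\rho(K),\boxdot)$ an operation-preserving surjection, hence a group homomorphism whose kernel is $\ker\rho$.

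You instead verify normality of $\ker\rho$ in $C$ element by element, using $cxc^{-1}=c+\lambda_c(x)+\lambda_{cx}(c^{-1})$ together with $\beta_{cx}=\beta_c$.

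What each route buys:
\begin{itemize}
\item The paper's route obtains the implication and the final assertion in one stroke, but it leaves the brace axioms on $\rho(K)$ unchecked.
\item Your route separates the two and costs a short explicit computation. Because you then derive the final assertion by transport of structure from the quotient brace $B/\ker\rho$ along $\bar\rho$, you never have to verify any brace axiom on $\rho(K)$ directly.
\end{itemize}
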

\begin{proof}
  \emph{\ref{en-rep-id-1} implies~\ref{en-rep-id-2}}. Note that, for $c$, $\hat c\in C$, we have that
  \[\delta(c^{-1}\hat c)=\delta(c^{-1})+\lambda_{c^{-1}}(\delta(\hat c)).\]
  When we apply this formula to $\hat c=c$, we obtain that
  \[0=\delta(1)=\delta(c^{-1}c)=\delta(c^{-1})+\lambda_{c^{-1}}(\delta(c))\]
  and so $\delta(c^{-1})=-\lambda_{c^{-1}}(\delta(c))$. In particular,
  \[\delta(c^{-1}\hat c)=-\lambda_{c^{-1}}(\delta(c))+\lambda_{c^{-1}}(\delta(\hat c))=\lambda_{c^{-1}}(-\delta(c)+\delta(\hat c)).\]

  Suppose that $\rho(\delta(c))=\rho(\delta(\hat c))$. Then $-\delta(c)+\delta(\hat c)\in \ker\rho$ and, since $\ker \rho$ is a strong left ideal,  $\delta(c^{-1}\hat c)=\lambda_{c^{-1}}(-\delta(c)+\delta(\hat c))\in \ker \rho\subseteq \delta(\ker \beta)$. Hence, there exists $d\in \ker \beta$ such that $\delta(c^{-1}\hat c)=\delta(d)$. Since $\delta$ is bijective, $c^{-1}\hat c=d\in\ker \beta$. It follows that $\beta_c=\beta_{\hat c}$.

  \emph{\ref{en-rep-id-2} implies~\ref{en-rep-id-3}}. Condition~\ref{en-rep-id-2} implies that $\boxdot$ defined as in Equation~\eqref{eq-prod-br-rep} is a binary operation and $(\rho(K),\cdot,\boxdot)$ is a skew left brace with the desired lambda map. Furthermore, $\rho$ becomes a brace epimorphism. Therefore, $\ker\rho$ is an ideal of~$B$.
  
  \emph{\ref{en-rep-id-3} implies~\ref{en-rep-id-1}}. Suppose that $\ker\rho$ is an ideal. Let $\delta(c)\in\ker\rho$ and let $k\in K$. Since $\ker\rho$ is an ideal, by \cite[Lemma~3.7]{BallesterEsteban22}, $\lambda_c(k)-k\in\ker\rho$, hence $\rho(k)=\rho(\lambda_c(k))=\beta_c(\rho(k))$ for all $k\in K$. Therefore, $c\in\ker\beta$.\qedhere
\end{proof}

\begin{defi}\label{defi-br-rep}
Let $(B,+,\cdot)$ be a skew left brace with additive group $K$, multiplicative group $C$ and lambda map $\lambda$. A \emph{brace representation} of $B$ is a group representation $\rho\colon K\longrightarrow\GL(V)$ such that $\ker\rho$ is an ideal of $B$.
\end{defi}
\begin{remark}
If $\rho\colon K\longrightarrow\GL(V)$ is a brace representation, Proposition~\ref{prop-br-rep-equi} shows that $\ker\rho\subseteq\delta(\ker\beta)$. Consequently, $\rho(K)$ has a structure of skew left brace and $\rho$ is a brace homomorphism.
\end{remark}

\begin{ex}
Let $(G,\cdot,\cdot)$ be a trivial skew left brace, and consider a group representation $\rho\colon G\longrightarrow\GL(V)$. Since every normal subgroup of $G$ is an ideal, $\rho$ is a brace representation.  Hence, the brace representations of a trivial skew left brace coincide with the group representations.
\end{ex}

However, not every brace semi-representation is a brace representation.
\begin{ex}\label{ex-br-semi-rep-not-br-rep}
  Let $B$ be a skew left brace with additive group $K=\langle x\mid 6x=0\rangle$ and multiplication given by $(ax)(bx)=(a+(-1)^ab)x$ for $a$, $b\in\mathbb{Z}$. Consider the group representation $\rho\colon K\longrightarrow \C$ given by $\rho(kx)=\zeta^k$, where $\zeta=\mathrm{e}^{2\pi\mathrm{i}/3}$. Since $K$ is a cyclic group, every subgroup of $K$ is a strong left ideal of the skew left brace. Hence, $\rho$ is a brace semi-representation. The multiplicative group of~$B$ is isomorphic to the symmetric group of degree~$3$, which does not have normal subgroups of order~$2$. Therefore, $\ker \rho$ is not an ideal of~$B$, so $\rho$ is not a brace representation.

  We note that $\rho(3x)=\rho(0)=1$, but $\beta_{3x}(\rho(x))=\rho(\lambda_{3x}(x))=\rho(-x)=\zeta^2$, while $\beta_0(\rho(x))=\rho(\lambda_0(x))=\rho(x)=\zeta$. Hence, $\beta_{3x}\ne \beta_0$.
\end{ex}

\section{Trifactorised group representations}\label{sec-3fact-rep}
In this section, we will consider trifactorised groups in the sense of~\cite{BallesterEstebanPerezAPerezC25-arXiv-categoriesskewleftbraces}, that is, $4$-tuples of the form $(G, K, H, E)$, where $G=KH=KE=HE$, $K$ is a normal subgroup of~$G$, and $K\cap E=H\cap E=1$. We will define a representation of a generalised trifactorised group as a group representation $\rho\colon G\longrightarrow \GL(V)$ such that the kernel and the image are trifactorised groups compatible with the trifactorisation of $G$. By \cite[Proposition 5.9]{BallesterEstebanPerezAPerezC25-arXiv-categoriesskewleftbraces}, we can define a representation of a trifactorised group as follows:
\begin{defi}\label{defi-3fact-rep}
Let $(G,K,H,E)$ be a trifactorised group. A \emph{trifactorised group representation} is a group representation $\rho\colon G\longrightarrow\GL(V)$ satisfying 
\begin{equation}\label{eq-3fact-rep}
\ker\rho=(\ker\rho\cap K)H\cap(\ker\rho\cap K)E.
\end{equation}
\end{defi}

Note that this is a group representation of $G$ such that $\ker\rho$ is a trifactorised subgroup of $(G,K,H,E)$ in the sense of \cite[Definition~5.8]{BallesterEstebanPerezAPerezC25-arXiv-categoriesskewleftbraces}.

\begin{prop}\label{prop-Im-3fact-rep}
If $(G, K, H, E)$ is a trifactorised group and $\rho\colon G\longrightarrow \GL(V)$ is a trifactorised group representation, then $(\rho(G),\rho(K),\rho(H),\rho(E))$ is a trifactorised group.
\end{prop}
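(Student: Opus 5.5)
The plan is to identify $\rho(G)$ with $G/N$, where $N=\ker\rho$, and to check the four axioms of a trifactorised group for the images. Write $N_K=N\cap K$; it is normal in $G$ because both $N$ and $K$ are. The factorisation and normality conditions are immediate, since $\rho$ is a homomorphism:
\[\rho(G)=\rho(K)\rho(H)=\rho(K)\rho(E)=\rho(H)\rho(E),\qquad \rho(K)\trianglelefteq\rho(G).\]
So the real content is the two trivial-intersection conditions $\rho(K)\cap\rho(E)=1$ and $\rho(H)\cap\rho(E)=1$. Equivalently, I must show $KN\cap EN=N$ and $HN\cap EN=N$. This is exactly where Equation~\eqref{eq-3fact-rep} enters.

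First I extract a consequence of \eqref{eq-3fact-rep}. Since $N\subseteq N_KE$, every $n\in N$ can be written as $n=me'$ with $m\in N_K\subseteq N$ and $e'\in E$. Then $e'=m^{-1}n\in N$, so $N=N_K(N\cap E)$. In the same way, $N=N_K(N\cap H)$. Now suppose $\rho(k)=\rho(e)$ with $k\in K$ and $e\in E$. Then $e\in kN=kN_K(N\cap E)$, so $e=kme'$ with $m\in N_K$ and $e'\in N\cap E$. Hence $e e'^{-1}=km\in K\cap E=1$, so $e=e'\in N$ and $\rho(e)=1$. This gives $\rho(K)\cap\rho(E)=1$.

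The main obstacle is $\rho(H)\cap\rho(E)=1$, because $H$ and $E$ are not normal and the decompositions above do not immediately separate them. My first attempt is to work modulo $N_K$, which is normal in $G$. Suppose $\rho(h)=\rho(e)$. Decompose $h^{-1}e\in N$ both as $m_1h_1\in N_KH$ and as $m_2e_2\in N_KE$, with $h_1\in N\cap H$ and $e_2\in N\cap E$. This gives $hh_1\equiv e\,e_2^{-1}\pmod{N_K}$. So it suffices to show that $HN_K\cap EN_K=N_K$. In other words, I need that $(G/N_K,K/N_K,HN_K/N_K,EN_K/N_K)$ is again trifactorised. In brace language, this says that $N_K$ corresponds to an ideal of the associated skew left brace: the ideal conditions make $N_K$ normal and $H$-invariant in the appropriate sense. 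That forces $H\cap EN_K\subseteq N_K$, using the bijection $E\to K$, $e\mapsto$ the $K$-component of $e$ in $G=KH$.

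Concretely, I would write $e=k_eh_e$ in $G=KH$ and argue that $e\in HN_K$ forces $k_e\in N_K$. Then the trifactorised-subgroup condition \eqref{eq-3fact-rep}, via \cite[Proposition~5.9]{BallesterEstebanPerezAPerezC25-arXiv-categoriesskewleftbraces}, gives $e\in N_KH\cap N_KE=N$. If this direct computation becomes awkward, the fallback is to invoke that cited proposition directly: quotients of trifactorised groups by trifactorised normal subgroups are trifactorised. I would then use the isomorphism $\rho(G)\cong G/N$, which carries $KN/N$, $HN/N$ and $EN/N$ onto $\rho(K)$, $\rho(H)$ and $\rho(E)$.
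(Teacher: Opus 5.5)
Your factorisation and normality checks, and your proof of $\rho(K)\cap\rho(E)=1$, are correct and essentially match the paper's argument. The gap is in the half you call the main obstacle, $\rho(H)\cap\rho(E)=1$.

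You reduce this to showing $HN_K\cap EN_K=N_K$, that is, that $(G/N_K,K/N_K,HN_K/N_K,EN_K/N_K)$ is trifactorised. That target is false. Since $N_K\trianglelefteq G$, we have $HN_K\cap EN_K=N_KH\cap N_KE$, and this equals $N$ by \eqref{eq-3fact-rep}. So your target would force $N\le K$. It already fails for the trivial representation: $N=G$ satisfies \eqref{eq-3fact-rep} and $N_K=K$, but $HK\cap EK=G\neq K$ whenever $E\neq 1$.

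For the same reason, your brace-theoretic claim that the ideal conditions force $H\cap EN_K\subseteq N_K$ is wrong. With $N_K=K$, which is certainly an ideal, it would give $H\le K$. This fails in the group of Example~\ref{ex-grp-rep-not-3fact-rep}, where $H=\langle ab\rangle\not\le\langle a\rangle=K$.

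Your last paragraph does not rescue the reduction. It concludes $e\in N$, not $e\in N_K$, so it does not prove the claim you set up. Also, the map $e\mapsto k_e$ via $G=KH$ is not well defined when $K\cap H\neq 1$, which the definition of trifactorised group allows. Your fallback of citing that quotients by trifactorised normal subgroups are trifactorised amounts to citing the statement itself. There is also a slip in the congruence: from $h^{-1}e=m_1h_1=m_2e_2$ you get $e\equiv hh_1$ and $ee_2^{-1}\equiv h$ modulo $N_K$, not $hh_1\equiv ee_2^{-1}$.

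The missing observation is that \eqref{eq-3fact-rep} already gives $H\cap N_KE\subseteq N_KH\cap N_KE=N$. If $\rho(h)=\rho(e)$, then $h\in Ne\subseteq N_KE$. Your own decomposition $h^{-1}e=m_2e_2$ shows the same thing. Hence $h\in H\cap N_KE\subseteq N$ and $\rho(h)=1$, with no passage to $G/N_K$.

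The paper argues symmetrically with the $K$-case instead. It uses $\ker\rho\subseteq(\ker\rho\cap H)E$, the trifactorised-subgroup property of $\ker\rho$, together with $H\cap E=1$. That inclusion comes from the same observation: writing $m\in N_K$ as $m=he$ with $h\in H$, $e\in E$ gives $h=me^{-1}\in H\cap N_KE\subseteq N$. Combined with $N=N_K(N\cap E)$, this yields $N=(N\cap H)(N\cap E)$.
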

\begin{proof}
  Since $G=KH=KE=HE$, $\rho(G)=\rho(K)\rho(H)=\rho(K)\rho(E)=\rho(H)\rho(E)$. Since $K\trianglelefteq G$, $\rho(K)\trianglelefteq \rho(G)$. Suppose $y\in\rho(K)\cap \rho(E)$, then $y=\rho(k)=\rho(e)$ and so $ke^{-1}\in \ker\rho\subseteq (\ker\rho\cap K)E$. As $K\cap E=1$, we have that $k\in \ker\rho\cap K$, that is, $\rho(k)=1$. Consequently $y=1$ and $\rho(K)\cap \rho(E)=1$. Suppose now that $z\in\rho(H)\cap \rho(E)$. Then $z=\rho(h)=\rho(e)$ and so $he^{-1}\in\ker\rho\subseteq (\ker\rho\cap H)E$. As $H\cap E=1$, we have that $h\in\ker\rho\cap H$, that is, $\rho(h)=1$ and so $z=1$ and $\rho(H)\cap \rho(E)=1$.  
\end{proof}

The condition in Equation~\eqref{eq-3fact-rep} is necessary, because there are group representations of~$G$ for which Proposition~\ref{prop-Im-3fact-rep} does not hold.
\begin{ex}\label{ex-grp-rep-not-3fact-rep}
Consider a direct product $G=\gen{a}\x\gen{b}$ of two cyclic groups of order~$4$. If we define $K=\gen{a}$, $E=\gen{b}$, and $H=\gen{ab}$, we have that $(G,K,H,E)$ is a trifactorised group. In fact, it is the large trifactorised group associated with the trivial skew left brace with additive group $C_4$ (the one analysed in \cite{BallesterEsteban22}, see also \cite[Section~2]{BallesterEstebanPerezAPerezC25-arXiv-categoriesskewleftbraces}). Consider the group representation $\rho\colon G\longrightarrow\GL(\C^4)$ given by
\begin{align*}
\rho(a)=\begin{bmatrix}
-1 & 0 & 0 & 0\\
0 & -1 & 0 & 0\\
0 & 0 & -1 & 0\\
0 & 0 & 0 & -1 
\end{bmatrix},\quad \rho(b)=\begin{bmatrix}
0 & 0 & 0 & 1\\
1 & 0 & 0 & 0\\
0 & 1 & 0 & 0\\
0 & 0 & 1 & 0  
\end{bmatrix}.
\end{align*} 
Then $\ker\rho=\gen{a^2}$. However $(\ker\rho\cap K)H\cap (\ker\rho\cap K)E=\langle a^2\rangle \langle ab\rangle\cap \langle a^2\rangle \langle b\rangle=\gen{a^2, b^2, ab}\cap \langle a^2, b\rangle=\langle a^2, b^2\rangle\neq\ker\rho$. Furthermore, $\rho(a^2b^2)=\rho(a)^2\rho(b^2)=\rho(b^2)\ne 1$. Therefore, $\rho(H)\cap \rho(E)\neq 1$, which means that $\rho$ is not compatible with the trifactorisation.
\end{ex}

\section{Induced trifactorised group representations for finite braces}\label{sec-ind-3fact-rep}

In this section, we will consider only finite skew left braces and finite groups. 

 We have constructed trifactorised groups associated with skew left braces in \cite{BallesterEstebanPerezAPerezC25-arXiv-categoriesskewleftbraces}. The aim of this section is, given a finite skew left brace $B$, a brace representation $\rho$ of~$B$, and a trifactorised group $(G, K, H, E)$ associated with~$B$, to obtain a trifactorised group representation $\bar{\bar\rho}$ of $(G, K, H, E)$ compatible with~$\rho$.

Let $(G,K,H,E)$ be a trifactorised group associated with $B$ with corresponding epimorphism $\eta\colon C\longrightarrow E$.

\begin{defi}\label{def-induced-rep}
Consider a brace representation $\rho\colon K\longrightarrow\GL(V)$ of a finite skew left brace $B$ over a field $\F$. We say that a group representation $\bar{\bar\rho}\colon G\longrightarrow\GL(W)$, where $W$ is an $\F$-vector space, is an \emph{induced trifactorised group representation} of $\rho$ to $(G,K,H,E)$ if the following two conditions hold:
\begin{enumerate}[(\text{IR}1)]
\item $\ker\bar{\bar\rho}=(\ker\bar{\bar\rho}\cap K)H\cap(\ker\bar{\bar\rho}\cap K)E$.\label{IR1}
\item There exists a $K$-invariant subspace $V_0$ of $W$ such that:\label{IR2}
\begin{enumerate}
\item There exists an $\F K$-isomorphism $\alpha\colon V\longrightarrow V_0$, that is, $\bar{\bar\rho}(k)\circ \alpha=\alpha\circ\rho(k)$ for all $k\in K$, and
\item $W=\displaystyle\bigoplus_{\bar e\in \bar E}\bar e V_0$, where $\bar E= E\ker\bar{\bar\rho}/\ker\bar{\bar\rho}$. 
\end{enumerate}
\end{enumerate}
\end{defi}

Condition~\ref{IR1} comes from Equation~\eqref{eq-3fact-rep}, but we can also consider a stronger condition.
\begin{prop}\label{prop-ker=rho-ker-rho}
  If $\rho\colon K\longrightarrow \GL(V)$ is a brace representation of the finite skew left brace~$B$ over a field~$\mathbb{F}$, $(G, K, H, E)$ is a trifactorised group associated with~$B$,  and $\bar{\bar\rho}\colon G\longrightarrow \GL(W)$ is a group representation satisfying Condition~\ref{IR2} of Definition~\ref{def-induced-rep}, then 
$\ker\bar{\bar\rho}\cap K=\ker\rho$.
\end{prop}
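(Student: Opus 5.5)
The plan is to prove the two inclusions separately. For $\ker\bar{\bar\rho}\cap K\subseteq\ker\rho$ we use only Condition~\ref{IR2}(a). For the reverse inclusion we combine the decomposition in Condition~\ref{IR2}(b) with the fact that $\ker\rho$ is an ideal of~$B$, and in particular is $\lambda$-invariant.

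\emph{First inclusion.} Let $k\in\ker\bar{\bar\rho}\cap K$. Then $\bar{\bar\rho}(k)$ is the identity on $W$, and in particular on $V_0$. By Condition~\ref{IR2}(a), $\alpha\circ\rho(k)=\bar{\bar\rho}(k)\circ\alpha=\alpha$. Since $\alpha$ is injective, $\rho(k)=1$, so $k\in\ker\rho$.

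\emph{Second inclusion.} Let $k\in\ker\rho$. By Condition~\ref{IR2}(b), it suffices to show that $\bar{\bar\rho}(k)$ fixes every vector $\bar{\bar\rho}(e)\alpha(v)$ with $e\in E$ and $v\in V$. Here $e$ runs over representatives of $\bar E$; since $\bar E=E\ker\bar{\bar\rho}/\ker\bar{\bar\rho}$, the subspace $\bar eV_0$ does not depend on the representative. We write
\[
\bar{\bar\rho}(k)\bar{\bar\rho}(e)\alpha(v)=\bar{\bar\rho}(e)\,\bar{\bar\rho}(e^{-1}ke)\,\alpha(v).
\]
Because $K\trianglelefteq G$, the element $k':=e^{-1}ke$ lies in $K$. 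Hence $\bar{\bar\rho}(k')\alpha(v)=\alpha(\rho(k')v)$, and the whole claim reduces to showing $k'\in\ker\rho$.

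\emph{Main step.} The key input, and the step I expect to need the most care, is how $E$ acts on $K$ by conjugation in a trifactorised group associated with~$B$. By the construction in \cite{BallesterEstebanPerezAPerezC25-arXiv-categoriesskewleftbraces}, $G$ is a quotient of the semidirect product $[K]C$ with respect to the lambda action, and $E=\eta(C)$. Under this identification, conjugation by $\eta(c)$ on $K$ is $\lambda_c$. So if $e=\eta(c)$, then $e^{-1}ke=\lambda_{c^{-1}}(k)$, or $\lambda_c(k)$ depending on the side convention; either way it is $\lambda_d(k)$ for some $d\in C$. Since $\rho$ is a brace representation, $\ker\rho$ is an ideal, hence a $\lambda$-invariant strong left ideal. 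Therefore $k'=\lambda_d(k)\in\ker\rho$.

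\emph{Conclusion.} With $k'\in\ker\rho$, we get $\bar{\bar\rho}(k')\alpha(v)=\alpha(v)$, so $\bar{\bar\rho}(k)$ fixes $\bar{\bar\rho}(e)\alpha(v)$. Thus $\bar{\bar\rho}(k)$ is the identity on every summand $\bar eV_0$, hence on $W$. This gives $k\in\ker\bar{\bar\rho}\cap K$ and completes the equality.
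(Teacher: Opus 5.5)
Your proof is correct and follows essentially the same route as the paper. The first inclusion uses injectivity of $\alpha$. For the second, you write $k'=e^{-1}ke\in\ker\rho$ and check that $\bar{\bar\rho}(k)$ fixes each summand $\bar eV_0$ of $W$. The paper phrases this key step as ``$\ker\rho$ is normal in $G$'', whereas you derive it explicitly from $\lambda$-invariance of $\ker\rho$ and the fact that conjugation by $\eta(c)$ on $K$ is $\lambda_c$. Your version also makes visible that only the strong-left-ideal property of $\ker\rho$ is needed.
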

\begin{proof}
Let $k\in\ker\bar{\bar\rho}\cap K$ and $v\in V$, then $\alpha(v)=\bar{\bar\rho}(k)(\alpha(v))=\alpha(\rho(k)(v))$.
Since $\alpha$ is bijective, we have that $v=\rho(k)(v)$. Thus, $k\in\ker\rho$.

On the other hand, consider $k\in\ker\rho$, $e\in E$ and $v\in V$. Since $\ker\rho$ is an ideal, $\ker\rho$ is normal in $G$. Therefore,  there exists $k'\in \ker\rho$ such that $k=ek'e^{-1}$. Then 
\begin{align*}
\bar{\bar\rho}(k)(e\bullet \alpha(v))&=\bar{\bar\rho}(ke)(\alpha(v))=\bar{\bar\rho}(ek')(\alpha(v))=\alpha(\rho(e)(\rho(k')(v)))\\&=\alpha(\rho(e)(v))=\bar{\bar\rho}(e)(\alpha(v))=e\bullet\alpha(v).
\end{align*}
Since $W=\bigoplus_{\bar e\in\bar E}\bar eV_0$, it follows that $k\in\ker\bar{\bar\rho}$.\qedhere
\end{proof}

\begin{remark}
By Proposition~\ref{prop-ker=rho-ker-rho} we can replace Condition~\ref{IR1} of Definition~\ref{def-induced-rep} by the following condition:
\begin{enumerate}[(\text{IR}1')]
\item $\ker\bar{\bar\rho}=(\ker\rho)H\cap(\ker\rho)E$. \label{IR1'}
\end{enumerate}
\end{remark}

Let us suppose that $\rho$ is a brace representation of~$B$. Take $T=(\ker\rho)H\cap(\ker\rho)E$. Since $K\cap T=\ker\rho$, we have that the decomposition of an element $kt$ of $KT$ as a product of $k\in K$ and $t\in T$ is unique and so $\bar\rho\colon KT\longrightarrow\GL(V)$ given by $\bar\rho(kt)=\rho(k)$ for all $k\in K$ and $t\in T$ is a map. Let $k$, $k_1\in K$ and $t$, $t_1\in T$. Since $\ker\rho$ is an ideal, $T$ is a normal subgroup of $G$ by \cite[Proposition~5.2~(4)]{BallesterEstebanPerezAPerezC25-arXiv-categoriesskewleftbraces}. It follows that $k_1^{-1}tk_1t^{-1}\in [K, T]\subseteq K\cap T=\ker\rho$ by \cite[Kapitel~III, Hilfssatz~1.6]{Huppert67}, and so $\rho(k_1)=\rho(tk_1t^{-1})$. Hence $\bar\rho(ktk_1t_1)=\bar\rho(k(tk_1t^{-1})tt_1)=\rho(k(tk_1t^{-1}))=\rho(k)\rho(tk_1t^{-1})=\rho(k)\rho(k_1)=\bar\rho(kt)\bar\rho(k_1t_1)$ and so $\bar\rho$ is a group homomorphism, that is, $V$ is an $\F (KT)$-module via the action $kt\RHD v=\bar\rho(kt)(v)=\rho(k)(v)$. Furthermore, $1=\bar \rho(kt)=\rho(k)$ if, and only if, $k\in \ker\rho$, that is, $\ker\bar\rho=(\ker \rho)T=T$.
Moreover, $\F G$ is an $(\F G,\F(KT))$-bimodule via the multiplication action. Therefore, we can consider the $\F G$-module $W=\F G\otimes_{\F (KT)}V$ given by $g\bullet (\omega\otimes v)=g\omega\otimes v.$ This gives us a group representation of $G$,
\begin{equation}
  \bar{\bar\rho}\colon G\longrightarrow\GL(W),\quad \bar{\bar\rho}(g)(\omega\otimes v)=g\omega\otimes v.\label{eq-bar-bar-rho}
\end{equation}

\begin{prop}\label{prop-ind-3fact-rep-exist}
If $\rho$ is a brace representation of a finite skew left brace~$B$ and $(G, K, H, E)$ is a trifactorised group associated with $B$, then $\bar{\bar\rho}$ defined as in Equation~\eqref{eq-bar-bar-rho} is a trifactorised group representation of $(G,K,H,E)$. Moreover, in this case, $\bar{\bar\rho}$ is an induced trifactorised group representation of $\rho$ to $(G,K,H,E)$.
\end{prop}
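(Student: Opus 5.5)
The plan is to establish Condition~\ref{IR2} first, using the structure of induced modules. With that in hand, Proposition~\ref{prop-ker=rho-ker-rho} gives $\ker\bar{\bar\rho}\cap K=\ker\rho$. It then remains to compute $\ker\bar{\bar\rho}$ and show that it equals $T=(\ker\rho)H\cap(\ker\rho)E$, which is exactly Condition~(IR1'). By the remark following Proposition~\ref{prop-ker=rho-ker-rho}, this gives Condition~\ref{IR1} and hence the trifactorised condition~\eqref{eq-3fact-rep}.

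\textbf{Step 1: transversal and decomposition of $W$.} Since $G=KE$, we have $G=(KT)E$, and $ET/T$ is a transversal-generating set for $G/KT$. More precisely, $KT\cap E=?$ must be controlled. I will use $T\le (\ker\rho)E$, which gives $KT\subseteq K(\ker\rho)E=KE$. The quotient map then shows that $G/KT$ is in bijection with $E/(E\cap KT)$. One checks that $E\cap KT=E\cap T$: if $e=kt$ with $t=k_0e_0$, $k_0\in\ker\rho$, then uniqueness of the factorisation in $KE$ forces $e=e_0$. Moreover $E\cap T\subseteq\ker\bar{\bar\rho}$, since $T$ acts trivially on $V$ and is normal in $G$.

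Choose a set $R$ of representatives of the cosets of $E\cap T$ in $E$. Standard induced-module theory gives $W=\bigoplus_{e\in R}e\otimes V$. Let $V_0=1\otimes V$ and $\alpha(v)=1\otimes v$. Then $\alpha$ is an $\F K$-isomorphism, because $k\otimes v=1\otimes\rho(k)v$ and $\F G$ is free over $\F(KT)$.

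\textbf{Step 2: $\ker\bar{\bar\rho}=T$.} First, $T\subseteq\ker\bar{\bar\rho}$. Indeed, for $t\in T$ we have $t e=e(e^{-1}te)$ with $e^{-1}te\in T$ by normality, and $T$ acts trivially on $V$. This also shows that $\bar E=E\ker\bar{\bar\rho}/\ker\bar{\bar\rho}$ is indexed by $E/(E\cap\ker\bar{\bar\rho})$. Once the kernel is known to be $T$, this matches $E/(E\cap T)$, so that (IR2)(b) holds with $\bar e V_0=e\otimes V$.

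Conversely, let $g\in\ker\bar{\bar\rho}$ and write $g=ke$ with $k\in K$, $e\in E$. Acting on $1\otimes v$ gives $ke\otimes v=1\otimes v$. The direct sum decomposition forces $e\in KT$, hence $e\in E\cap T$. It follows that $k\in\ker\bar{\bar\rho}\cap K=\ker\rho\subseteq T$, and therefore $g\in T$.

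\textbf{Main obstacle.} The delicate point is the identification $E\cap KT=E\cap T$, together with the use of normality of $T$ (\cite[Proposition~5.2~(4)]{BallesterEstebanPerezAPerezC25-arXiv-categoriesskewleftbraces}). This identification is what makes the cosets $eKT$ for distinct $e\in R$ genuinely distinct, and what makes $\bar E$ match the summands. I would verify it carefully using $K\cap E=1$ and $T\subseteq(\ker\rho)E$.

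Finally, the equality $\ker\bar{\bar\rho}=T=(\ker\rho)H\cap(\ker\rho)E$, combined with $\ker\bar{\bar\rho}\cap K=\ker\rho$, yields~\eqref{eq-3fact-rep}. This shows that $\bar{\bar\rho}$ is a trifactorised group representation and an induced one.
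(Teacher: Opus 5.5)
Your proof is correct and is essentially the paper's argument. It uses the same induced module $W=\F G\otimes_{\F(KT)}V$, the same $V_0=1\otimes V$ and $\alpha$, and the same key fact $E\cap KT\subseteq T$, which makes coset representatives of $E\cap T$ in $E$ a transversal of $KT$ in $G$; you get this fact from unique factorisation in $G=KE$, and the paper from a Dedekind chain giving $ET\cap KT=T$. The only real difference is that you compute $\ker\bar{\bar\rho}=T$ by hand from the direct sum decomposition, whereas the paper just cites $\ker\bar{\bar\rho}=\Core_G(\ker\bar\rho)=\Core_G(T)=T$.

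One expository point: in that computation you only need $\ker\bar{\bar\rho}\cap K\subseteq\ker\rho$. This follows from injectivity and $K$-equivariance of $\alpha$ alone, i.e.\ from Condition~(IR2)(a). Argue it directly rather than citing Proposition~\ref{prop-ker=rho-ker-rho}, whose hypothesis includes (IR2)(b), which in your ordering you only obtain after the kernel is known.
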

\begin{proof}
  We know that $\ker\rho=T\cap K$ and $\ker\bar\rho=T$. 
  By \cite[Chapter~B, Proposition~6.4]{DoerkHawkes92}, $\ker\bar{\bar\rho}=\Core_G(\ker\bar\rho)=\Core_G(T)=T$. Since $T=(\ker \rho)H\cap (\ker \rho)E=(T\cap K)H\cap (T\cap K)E$, we have that $T=\ker\bar{\bar \rho}$ is a trifactorised group representation of~$B$. In particular, $\bar{\bar\rho}$ satisfies Condition~\ref{IR1}.

Let us prove that it satisfies Condition~\ref{IR2}, too. Take $V_0=1\otimes V$, which is an $\F K$-module, and $\alpha\colon V\longrightarrow V_0$ given by $\alpha(v)=1\otimes v$, which is an $\F K$-isomorphism.

Let $\bar E=ET/T=\{\bar e_1,\dots, \bar e_n\}$ where $\bar e_j=e_jT$ for some $e_j\in E$, $1\le j\le n$. Since $G=KE$, we have that $G/T=(KT/T)(ET/T)$ and
\begin{align*}
    ET\cap KT&=(E\cap K((\ker\rho)H\cap (\ker\rho)E))T\\&=(E\cap K(\ker\rho)((\ker\rho)H\cap E))T=(E\cap K((\ker\rho)H\cap E))T\\
    &=(E\cap K)((\ker\rho)H\cap E)T=T
\end{align*}
by a repeated application of the Dedekind identity and so $ET/T$ is a transversal of $KT/T$ in $G/T$. It follows that $\{e_1,\dots, e_n\}$ is a transversal of $KT$ in $G$. Then by \cite[Chapter~B, Equation~(6.$\alpha$)]{DoerkHawkes92},
\[W=\bigoplus_{i=1}^ne_i\otimes V=\bigoplus_{i=1}^ne_iV_0=\bigoplus_{i=1}^n\bar e_iV_0.\]

 Hence, $\bar{\bar\rho}$ satisfies Condition~\ref{IR2}.
\end{proof}

\begin{defi}
  Given two trifactorised group representations $\bar{\bar\rho}\colon G\longrightarrow \GL(W)$ and $\bar{\bar\rho}'\colon G\longrightarrow \GL(W)$, we say that they are \emph{equivalent} if there exists a linear map $\varphi\colon W\longrightarrow W$ such that $\varphi\circ\bar{\bar\rho}(g)=\bar{\bar\rho}'(g)\circ\varphi$, in other words, when there exists an $\F G$-isomorphism $\varphi\colon V\longrightarrow W$.
\end{defi}

\begin{prop}\label{prop-ind-3fact-rep-equi}
Let $\rho$ be a brace representation of a finite skew left brace~$B$ and let $(G, K, H, E)$ be a trifactorised group associated with~$B$. Then all induced trifactorised group representations of $\rho$ to $(G,K,H,E)$ are equivalent.
\end{prop}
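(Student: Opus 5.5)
The plan is to show that every induced trifactorised group representation $\bar{\bar\rho}'\colon G\longrightarrow\GL(W')$ of $\rho$ to $(G,K,H,E)$ is equivalent to the representation $\bar{\bar\rho}$ of Equation~\eqref{eq-bar-bar-rho}. By Proposition~\ref{prop-ind-3fact-rep-exist}, $\bar{\bar\rho}$ is itself induced, and equivalence is transitive, so this suffices. The approach is the standard characterisation of induced modules: a $G$-module containing a subgroup-stable subspace whose translates by a transversal give a direct sum decomposition is isomorphic to the induced module from that subspace.

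First I identify the kernel and the relevant subgroup. By Proposition~\ref{prop-ker=rho-ker-rho} and the Remark following it (condition (IR1')), $\ker\bar{\bar\rho}'=(\ker\rho)H\cap(\ker\rho)E=T$. Let $V_0'\subseteq W'$ and $\alpha'\colon V\longrightarrow V_0'$ be as in Condition~(IR2).

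Next I check that $V_0'$ is an $\F(KT)$-submodule and that $\alpha'$ is an $\F(KT)$-isomorphism for the module structure $kt\RHD v=\rho(k)(v)$. The subspace $V_0'$ is $K$-invariant by hypothesis, and $T$ acts trivially on $W'$ because $T=\ker\bar{\bar\rho}'$. Hence
\[
\bar{\bar\rho}'(kt)\alpha'(v)=\bar{\bar\rho}'(k)\alpha'(v)=\alpha'(\rho(k)v)=\alpha'(\bar\rho(kt)v).
\]

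By the universal property of $W=\F G\otimes_{\F(KT)}V$, there is a well-defined $\F G$-homomorphism $\varphi\colon W\longrightarrow W'$ with $\varphi(\omega\otimes v)=\bar{\bar\rho}'(\omega)\alpha'(v)$, extended linearly in $\omega$. The map $(\omega,v)\mapsto\bar{\bar\rho}'(\omega)\alpha'(v)$ is $\F(KT)$-balanced precisely because $\alpha'$ is $\F(KT)$-linear.

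The remaining step is bijectivity. Take the transversal $\{e_1,\dots,e_n\}$ of $KT$ in $G$ with $e_i\in E$, as in the proof of Proposition~\ref{prop-ind-3fact-rep-exist}; there $ET\cap KT=T$ gives $|\bar E|=|G:KT|=n$. Then $W=\bigoplus_i e_i\otimes V$, and $\varphi$ sends $e_i\otimes V$ isomorphically onto $e_iV_0'$, since $\bar{\bar\rho}'(e_i)$ is invertible and $\alpha'$ is bijective. The $\bar e_i$ are exactly the elements of $\bar E=ET/T$ (computed in $G/\ker\bar{\bar\rho}'=G/T$), so Condition~(IR2)(b) gives $W'=\bigoplus_i e_iV_0'$. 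Thus $\varphi$ maps a direct sum decomposition summandwise isomorphically onto a direct sum decomposition, and is therefore an isomorphism. This proves the equivalence.

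The main obstacle is making sure the index set in (IR2)(b) for $W'$ matches the transversal used for $W$. This requires $\ker\bar{\bar\rho}'=T$, which comes from (IR1) together with Proposition~\ref{prop-ker=rho-ker-rho}, and $\bar E$ being a transversal of $KT/T$, which comes from the Dedekind computation $ET\cap KT=T$. Once these are in place, the dimension count $\dim W=n\dim V=\dim W'$ is automatic.
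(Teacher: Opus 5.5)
Your proof is correct, but it is organised differently from the paper's. The paper goes from $W'$ to $W$. It defines $\varphi(\bar e\bullet\alpha'(v))=\bar e\otimes v$ directly on the decomposition $W'=\bigoplus_{\bar e\in\bar E}\bar eV_0'$. Bijectivity is then immediate, because both decompositions are indexed by the same $\bar E$. The work goes into checking $G$-equivariance by hand: the paper writes $ge=e'k'$ with $e'\in E$, $k'\in K$, and moves $k'$ through $\alpha'$ and across the tensor sign.

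You go from $W$ to $W'$ via the universal property of $\F G\otimes_{\F(KT)}V$, so equivariance and well-definedness are automatic. The work moves to two places:
\begin{itemize}
\item The balancedness check. Here you need $T=\ker\bar{\bar\rho}'$ to act trivially on $W'$, so that $\alpha'$ is $\F(KT)$-linear rather than merely $\F K$-linear.
\item Bijectivity. You get this summandwise from $e_i\otimes V\cong e_iV_0'$, once the transversal $\{e_1,\dots,e_n\}\subseteq E$ is matched with $\bar E=ET/T$.
\end{itemize}

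Your route is the standard recognition criterion for induced modules. It avoids any concern about whether $\bar e\otimes v$ depends on the chosen representative of $\bar e$. The paper's route is more hands-on: it never needs the tensor product's universal property, only the explicit decomposition $W=\bigoplus_i e_i\otimes V$.
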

\begin{proof}
Consider $\bar{\bar\rho}\colon G\longrightarrow\GL(W)$ constructed in Proposition~\ref{prop-ind-3fact-rep-exist} and consider another induced representation $\bar{\bar\rho}'\colon G\longrightarrow\GL(W')$ where $V_0'$ is the $K$-invariant subspace of $W'$ and $\alpha'\colon V\longrightarrow V_{0}'$ is the corresponding $\F K$-isomorphism. We want to show that $\bar{\bar\rho}$ and $\bar{\bar\rho}'$ are equivalent representations, which is equivalent to prove that $W$ and $W'$ are isomorphic $\F G$-modules. 

Since $\ker\bar{\bar\rho}'=(\ker\rho) H\cap(\ker\rho) E=\ker\bar{\bar\rho}$, we have that
\[\bar E=E\ker\bar{\bar\rho}/\ker\bar{\bar\rho}=E\ker\bar{\bar\rho}'/\ker\bar{\bar\rho}'.\]
Therefore, we can define $\varphi\colon W'\longrightarrow W$ given by $\varphi(\bar e\bullet \alpha'(v))=\bar e\otimes v$ and extended by linearity. We know that $W'=\bigoplus_{\bar e\in\bar E}\bar e V_0'$, $W=\bigoplus_{\bar e\in\bar E}\bar eV_0$ and $\alpha'$ is bijective, therefore $\varphi$ is a bijective map. Furthermore, given $g\in G$, $e\in E$, and $v\in V$ there exists $k'\in K$ and $e'\in E$ such that $ge=e'k'$. Then
\begin{align*}
\varphi(g\bullet (\bar e\bullet \alpha'(v)))&=\varphi(\bar{\bar\rho}'(ge)(\alpha'(v)))=\varphi(\bar{\bar\rho}'(e'k')(\alpha'(v)))\\&=\varphi(\bar{e}'\bullet(\bar{\bar\rho}'(k')(\alpha'(v))))=\varphi(\bar{e}'\bullet\alpha'(\rho(k')(v)))\\&=\bar{e}'\otimes \rho(k')(v)=\bar{e}'\otimes (\bar{k}'\bullet v)=\overline{e'k'}\otimes v=\overline{ge}\otimes v\\&=g\bullet (\bar{e}\otimes v)=g\bullet\varphi(\bar e\bullet\alpha'(v))
\end{align*}
In conclusion, $\varphi$ is an $\F G$-isomorphism and $\bar{\bar\rho}$ and $\bar{\bar\rho}'$ are equivalent representations. Therefore, every induced trifactorised group representation of $\rho$ to $(G,K,H,E)$ is equivalent to $W$.
\end{proof}

Propositions~\ref{prop-ind-3fact-rep-exist} and~\ref{prop-ind-3fact-rep-equi} allow us to define the following:
\begin{defi}\label{defi-ind-3fact-rep}
The \emph{induced trifactorised group representation} of the brace representation $\rho\colon B\longrightarrow \GL(V)$ of the finite skew left brace $B$ to the trifactorised group $(G,K,H,E)$ associated with~$B$ is
\[\bar{\bar\rho}\colon G\longrightarrow \GL(W),\]
where
\[W=\F G\otimes_{\F(KT)} V,\]
given by
$\bar{\bar\rho}(g)(\omega\otimes v)=g\omega\otimes v$ for $\omega\in \F G$ and $v\in V$,
where $T=(\ker\rho)H\cap (\ker\rho)E$, $V_0=1\otimes V$, and $\alpha\colon V\longrightarrow V_0$ is given by $\alpha(v)=1\otimes v$.
\end{defi}

\begin{prop}\label{prop-ind-3fact-rep-beta}
  Suppose that $\rho\colon K\longrightarrow \GL(V)$ is a brace representation of a skew left brace~$B$ with associated beta map $\beta$, $(G, K, H, E)$ is a trifactorised group associated with~$B$ with associated group epimorphism $\eta\colon C\longrightarrow E$, and $\bar{\bar\rho}\colon G\longrightarrow \GL(W)$ is the corresponding induced trifactorised group representation with associated $\F K$-isomorphism $\alpha$. Then, for every $c\in C$ and $k\in K$,
\[\alpha\circ\beta_c({\rho(k)})=({\bar{\bar\rho}(\eta(c))\circ\bar{\bar\rho} (k)\circ \bar{\bar\rho}(\eta(c))^{-1}})\circ\alpha.\]
\end{prop}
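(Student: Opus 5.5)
The plan is to understand what both sides of the identity are as linear maps on $V$ and $V_0=1\otimes V$, and to reduce the statement to a single fact about the trifactorised group. Concretely, I would show that for every $c\in C$ and $k\in K$,
\[\eta(c)\,k\,\eta(c)^{-1}=\lambda_c(k),\]
that is, conjugation by $\eta(c)$ on the normal subgroup $K$ realises the lambda action. I take this as the defining feature of a trifactorised group associated with $B$ (cf.\ \cite{BallesterEstebanPerezAPerezC25-arXiv-categoriesskewleftbraces}, where $G$ is a quotient of the semidirect product $[K]C$ via $\lambda$ and $\eta$ is the projection onto the image of $C$). I would cite the precise statement from there, or verify it directly from the construction of $(G,K,H,E)$ from $B$.

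Granting this, the computation is short. By Proposition~\ref{prop-br-semirep-equi}, $\beta_c(\rho(k))=\rho(\lambda_c(k))$. Applying $\alpha$ and using Condition~\ref{IR2}(a) with $\lambda_c(k)\in K$ gives
\[\alpha\circ\rho(\lambda_c(k))=\bar{\bar\rho}(\lambda_c(k))\circ\alpha.\]
Then, since $\bar{\bar\rho}$ is a group homomorphism,
\[\bar{\bar\rho}(\lambda_c(k))=\bar{\bar\rho}(\eta(c)k\eta(c)^{-1})=\bar{\bar\rho}(\eta(c))\circ\bar{\bar\rho}(k)\circ\bar{\bar\rho}(\eta(c))^{-1}.\]
Chaining these equalities yields exactly the identity in the statement.

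A subtlety is the meaning of the left-hand side: $\alpha\circ\beta_c(\rho(k))$ has to be read as $\alpha\circ\rho(\lambda_c(k))$, a map $V\to V_0$, equal to the composite on the right restricted to $V$. I would make this reading explicit. I would also note that $\bar{\bar\rho}(\eta(c)k\eta(c)^{-1})$ preserves $V_0$, because it is $\bar{\bar\rho}$ of an element of $K$ and $V_0$ is $K$-invariant. So both sides really are maps $V\to V_0$, with no need to track where the transversal elements $e_i$ send $V_0$.

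The main obstacle is the conjugation identity $\eta(c)k\eta(c)^{-1}=\lambda_c(k)$ in $G$. If it is not literally part of the cited definition, I would derive it from the structure: $G=KE$ with $E=\eta(C)$, and the derivation $\delta$ is encoded by $H=\{\delta(c)\eta(c)\}$ in the associated group. Conjugation by $\eta(c)$ on $K$ is the automorphism induced from the semidirect product $[K]C$ under the quotient map, and that automorphism is $\lambda_c$. Here one has to check that the quotient kernel meets $K$ trivially, so that the induced action on $K$ is still $\lambda_c$ and not merely $\lambda_c$ modulo something. This should follow from $K\cap E=1$ together with $K$ embedding in $G$.
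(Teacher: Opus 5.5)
Your proposal is correct and follows essentially the same route as the paper. The paper's proof is the same one-line chain read in the other direction: multiplicativity of $\bar{\bar\rho}$, the intertwining property of $\alpha$ on elements of $K$, the identity $\eta(c)k\eta(c)^{-1}=\lambda_c(k)$, and $\beta_c(\rho(k))=\rho(\lambda_c(k))$. Like you, the paper takes the conjugation identity as part of what it means for $(G,K,H,E)$ to be associated with $B$, and does not prove it separately.
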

\begin{proof}
Consider $k\in K$, $c\in C$ and $v\in V$ then
\begin{align*}
({\bar{\bar\rho}(\eta(c))\circ\bar{\bar\rho} (k)\circ \bar{\bar\rho}(\eta(c))^{-1}})(\alpha(v))&=\bar{\bar\rho} ({\eta(c)k\eta(c)^{-1}})(\alpha(v))\\&=\alpha({\rho({\eta(c)k\eta(c)^{-1}})(v)})\\&=\alpha({\rho({\lambda_c(k)})(v)})\\&=\alpha({\beta_c({\rho(k)})(v)}).\qedhere
\end{align*}
\end{proof}
\begin{remark}
Proposition~\ref{prop-ind-3fact-rep-beta} shows that $\beta_c$ can be viewed as the conjugation in $\bar{\bar\rho}(K)$ by the element $\bar{\bar\rho}(\eta(c))$. Therefore, $\beta$ is encoded as the conjugation by the elements of $\bar{\bar\rho}(E)$ in $\bar{\bar\rho}(K)$, in a similar way as $\lambda$ is encoded in the trifactorised group as the conjugation by the elements in $E$ in $K$. This is the expected behaviour, as $\beta$ encodes $\lambda$ in the brace representation.
\end{remark}

\begin{prop}\label{prop-3fact-to-rep-Br-rep}
Consider a group representation $\bar{\bar\rho}\colon G\longrightarrow\GL(V)$, where $(G, K, H, E)$ is a trifactorised group associated with a finite skew left brace~$B$. Then:
\begin{enumerate}
\item $\rho=\bar{\bar\rho}|_K$ is a brace semi-representation of $B$ with beta map $\beta$ satisfying that $\beta_c$ corresponds to the conjugation by $\bar{\bar\rho}({\eta(c)})$ in $\rho(K)$ for all $c\in C$.
\item  $\bar{\bar\rho}$ is a trifactorised group representation of $(G,K,H,E)$ if and only if, $\rho$ is a brace representation of $B$.
\end{enumerate}
\end{prop}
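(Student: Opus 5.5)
For part (1), I will verify the criterion of Proposition~\ref{prop-br-semirep-equi} directly, namely that $\ker\rho$ is a strong left ideal of~$B$. Clearly $\ker\rho=\ker\bar{\bar\rho}\cap K$ is normal in~$K$. For $\lambda$-invariance I will use the basic property of a trifactorised group associated with~$B$: for every $c\in C$ and $k\in K$,
\[\eta(c)k\eta(c)^{-1}=\lambda_c(k).\]
This is the same identity already used in the proof of Proposition~\ref{prop-ind-3fact-rep-beta}. Then for $k\in\ker\rho$ we get $\rho(\lambda_c(k))=\bar{\bar\rho}(\eta(c))\bar{\bar\rho}(k)\bar{\bar\rho}(\eta(c))^{-1}=1$, so $\ker\rho$ is a strong left ideal. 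The same computation shows that conjugation by $\bar{\bar\rho}(\eta(c))$ restricts to an automorphism $\beta'_c$ of $\rho(K)$ with $\rho\circ\lambda_c=\beta'_c\circ\rho$. By the uniqueness statement in Proposition~\ref{prop-br-semirep-equi}, $\beta'_c=\beta_c$, which proves part~(1).

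For part (2), I will use the characterisation of trifactorised subgroups from \cite[Proposition~5.9]{BallesterEstebanPerezAPerezC25-arXiv-categoriesskewleftbraces} and \cite[Proposition~5.2]{BallesterEstebanPerezAPerezC25-arXiv-categoriesskewleftbraces}: ideals $I$ of~$B$ correspond to normal subgroups of the form $IH\cap IE$.

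Suppose first that $\bar{\bar\rho}$ is a trifactorised group representation. Then $N=\ker\bar{\bar\rho}$ is a normal subgroup of~$G$ satisfying $N=(N\cap K)H\cap(N\cap K)E$, that is, a normal trifactorised subgroup. By the cited correspondence, $N\cap K=\ker\rho$ is an ideal of~$B$. If I prefer a direct argument, I will show normality of $\ker\rho$ in~$C$ through Proposition~\ref{prop-br-rep-equi}, checking $\ker\rho\subseteq\delta(\ker\beta)$. Take $\delta(c)\in\ker\rho$. Because of how $H$ sits in an associated trifactorised group, the element $\delta(c)^{-1}\eta(c)$ (up to the paper's conventions, $k\eta(c)\in H$ when $k=\delta(c)^{\pm1}$) lies in~$H$. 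Since $\delta(c)\in N$, this element then lies in $NH\cap NE$ modulo~$N$, which forces $\eta(c)\in(N\cap K)H\cap(N\cap K)E=N$. Hence $\bar{\bar\rho}(\eta(c))=1$, and by part~(1) $\beta_c=1$, i.e.\ $c\in\ker\beta$.

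Conversely, suppose $\ker\rho$ is an ideal. The inclusion $(\ker\rho)H\cap(\ker\rho)E\subseteq\ker\bar{\bar\rho}$ is the hard direction. I will take an element $x=ke=k'h$ with $k,k'\in\ker\rho$, $e\in E$, $h\in H$. Then $e=k^{-1}k'h\in KH\cap E$. Writing $e=\eta(c)$, the relation between $H$, $E$ and $\delta$ should give $k^{-1}k'=\delta(c)^{\pm1}$ modulo the kernel of~$\eta$. This puts $\delta(c)\in\ker\rho$, and by Proposition~\ref{prop-br-rep-equi} we get $c\in\ker\beta$.

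This is where I expect the main obstacle. Knowing $\beta_c=1$ only says that $\bar{\bar\rho}(\eta(c))$ centralises $\rho(K)$, not that $\bar{\bar\rho}(\eta(c))=1$. So I will need to redo the argument with $\ker\bar{\bar\rho}$ in place of~$\ker\rho$. First, $N\cap K=\ker\rho$, and $N$ is normal in~$G$. Then I will use the Dedekind-identity computation from the proof of Proposition~\ref{prop-ind-3fact-rep-exist} to compare $N$ with $T=(\ker\rho)H\cap(\ker\rho)E$, which is normal by \cite[Proposition~5.2~(4)]{BallesterEstebanPerezAPerezC25-arXiv-categoriesskewleftbraces}. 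The goal is to show $N=T$.

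If equality genuinely fails for arbitrary $\bar{\bar\rho}$, I expect the intended meaning of part~(2) is the weaker one. In that reading, $\bar{\bar\rho}$ trifactorised forces $\rho$ to be a brace representation, and conversely a brace representation $\rho$ extends to the trifactorised representation of Proposition~\ref{prop-ind-3fact-rep-exist} with kernel~$T$. I would then check the statement against small examples such as Example~\ref{ex-grp-rep-not-3fact-rep} before committing to a reading.
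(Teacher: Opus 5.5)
Part (1) matches the paper's argument. The ``only if'' half of (2) is also fine. The paper simply quotes the correspondence between ideals $I$ and normal subgroups $IH\cap IE$ from \cite{BallesterEstebanPerezAPerezC25-arXiv-categoriesskewleftbraces}. Your direct route is self-contained and correct once tidied up. If $N=\ker\bar{\bar\rho}=(N\cap K)H\cap(N\cap K)E$ and $\delta(c)\in N$, then $\eta(c)=\delta(c)^{-1}\sigma(c)\in(N\cap K)H\cap E\subseteq N$, using $\sigma(c)=\delta(c)\eta(c)\in H$. So $\beta_c$ is conjugation by $1$, which gives $\ker\rho\subseteq\delta(\ker\beta)$, and $\ker\rho$ is an ideal by Proposition~\ref{prop-br-rep-equi}.

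The gap is the converse, which you leave open. It cannot be closed, because it is false; you were right to distrust it and should have committed.

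\begin{itemize}
\item Your own suggested test case is a counterexample. In Example~\ref{ex-grp-rep-not-3fact-rep}, $(G,K,H,E)$ is the large trifactorised group of the trivial brace on $C_4$, where every subgroup of $K$ is an ideal. So $\rho=\bar{\bar\rho}|_K$ is a brace representation. Yet $\ker\bar{\bar\rho}=\langle a^2\rangle\neq\langle a^2,b^2\rangle=T$, where $T=(\ker\rho)H\cap(\ker\rho)E$.
\item More bluntly, take any $B$ with $|B|>1$ and its large trifactorised group, and inflate a faithful representation of $G/K$. Then $\ker\rho=K$ is an ideal, but $T=KH\cap KE=G\neq K=\ker\bar{\bar\rho}$.
\end{itemize}

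The reason is the one you isolated. The ideal condition only forces $\bar{\bar\rho}(\eta(c))$ to centralise $\rho(K)$ when $\delta(c)\in\ker\rho$. It constrains nothing else about $\bar{\bar\rho}$ on $E$.

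Nor is the other inclusion $\ker\bar{\bar\rho}\subseteq T$ automatic, although you implicitly treat it as unproblematic. Take the trivial brace on $C_2$ with $G=K\times E$, and a representation whose kernel is exactly $E$ (e.g.\ the regular representation of $G/E$). It has $\ker\rho=1$ and $T=H\cap E=1\neq E$.

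The paper's one-line proof of (2) makes the same slip. It reads ``$\ker\rho$ is an ideal'' as ``$T=\ker\bar{\bar\rho}$ and $T$ is normal'', whereas the ideal property yields only the normality of $T$.

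The provable content is twofold. First, if $\bar{\bar\rho}$ is a trifactorised group representation, then $\rho$ is a brace representation. Second, every brace representation $\rho$ \emph{induces} a trifactorised representation with kernel $T$ (Proposition~\ref{prop-ind-3fact-rep-exist}). In your weaker reading, say ``induces'', not ``extends''. There $\rho$ lives only on the summand $V_0$, and a representation of $G$ on $V$ itself restricting to $\rho$ need not exist. In Example~\ref{ex-br-semi-rep-not-ZLV-rep}, part (1) would force $\rho(1)$ to be conjugate to $\beta_1(\rho(1))=\rho(1)^{-1}$, and it is not.
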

\begin{proof}
  \begin{enumerate}
  \item 
We have that $\rho$ is a group representation of the additive group $K$ of $B$, with kernel $\ker\bar{\bar\rho}\cap K$. Since $\ker\bar{\bar\rho}\cap K$ is normal in $G$, it follows that it is lambda-invariant, therefore, it is a strong left ideal, and $\rho$ is a semi-representation of $B$. If $\beta\colon C\longrightarrow\Aut(\rho(K))$ is the beta map of $\rho$, it follows that $\beta_c(\rho(k))=\rho(\lambda_c(k))=\rho({\eta(c)k\eta(c)^{-1}})=\bar{\bar\rho}(\eta(c))\rho(k)\bar{\bar\rho}(\eta(c))^{-1}$.

\item
Moreover, $\bar{\bar\rho}$ is a trifactorised group representation if and only if, $(\ker\rho) H\cap(\ker\rho) E=\ker\bar{\bar\rho}$ is a normal subgroup of $G$ that is, $\ker\rho$ is an ideal of $B$, or, equivalently, $\rho$ is a brace representation. \qedhere
\end{enumerate}
\end{proof}

\section{ZLV-representations and modules}\label{sec-ZLV-rep}

In \cite[Remark~3.3]{Zhu22} for skew left braces with abelian additive group and in~\cite[Definition~4.1]{LetourmyVendramin24} for general skew left braces, a representation of a skew left brace~$B$, called here a \ZLV-representation to distinguish it from our representations, was defined in the following way:
\begin{defi}\label{defi-ZLV}
  A \emph{\ZLV-representation} of a skew left brace~$B$ with additive group $K$ and multiplicative group~$C$ is a pair $(\rho, \beta)$ of group representations $\rho\colon K\longrightarrow\GL(V)$ and $\beta\colon C\longrightarrow \GL(V)$, where $V$ is a vector space, such that
  \[\rho(\delta(xy))=\beta(x)\rho(\delta(y))\beta(x)^{-1}\rho(\delta(x))\]
  for all $x$, $y\in C$. The vector space $V$ is called the \emph{representation space} of $(\rho, \beta)$.
\end{defi}

We face some difficulties related to the lack of conditions ensuring that the brace homomorphism associated with this representation has an ideal as its kernel.
\begin{ex}\label{ex-ZLV-rep-not-br-rep}
 Let $B$ be the left brace where $B=\Z/6\Z=\{0,\ldots,5\}$ with the usual sum and the multiplication given by $ab=a+(-1)^ab$ for $a$, $b\in B$.  We note that the multiplicative group $(B, {\cdot})$ is isomorphic to the symmetric group $\Sigma_3$ of degree~$3$. Let $\zeta$ be a complex primitive cube root of unity and let $\rho$, $\beta\colon B\longrightarrow \GL_2(\C)$ be given by
\begin{align*}
  \rho(x)= \begin{bmatrix}
    \zeta^x&0\\
    0&\zeta^{2x}
  \end{bmatrix},\quad 
  \beta(x)= \begin{bmatrix}
    0&1\\
    1&0
  \end{bmatrix}^x,\quad 0\leq x\leq 5.
  \end{align*}
  It is easy to check that $(\rho,\beta)$ is a \ZLV-representation of $B$. However, $\ker\rho=\{0,3\}$ is not a normal subgroup of $(B,\cdot)$. Moreover, $\rho(3)=1$, while $\beta(3)\ne 1$.
\end{ex}
Our main concern here is that in Definition~\ref{defi-ZLV} there is no relation between the kernels of both homomorphisms as in Condition~\ref{en-rep-id-1} of Proposition~\ref{prop-br-rep-equi} and there is no proviso making the kernel of $\rho$ an ideal of $B$. We see now that \ZLV-representations can be regarded as brace semi-representations.

\begin{prop}\label{prop-ZLV-rep-ind-br-semi-rep}
Consider a \ZLV-representation $(\rho, \beta)$ of a skew left brace $B$ over a vector space $V$. Then $\rho$ is a brace semi-representation of $B$ and its beta map is given by the conjugations by $\rho(\delta(c))^{-1}\beta(c)$ in $\rho(K)$ for all $c\in C$.
\end{prop}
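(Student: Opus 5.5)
We will verify Condition~\ref{en-semirep-equi-1} of Proposition~\ref{prop-br-semirep-equi}: we exhibit $\beta'\colon C\longrightarrow\Aut(\rho(K))$ with $\rho\circ\lambda_c=\beta'_c\circ\rho$ for every $c\in C$. By that proposition, $\ker\rho$ is then a strong left ideal, so $\rho$ is a brace semi-representation, and its beta map is uniquely determined, hence equal to $\beta'$. The candidate is $\beta'_c(y)=g_c\,y\,g_c^{-1}$, where $g_c=\rho(\delta(c))^{-1}\beta(c)$.

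The key identity comes from the ZLV condition. For $x,y\in C$ it gives $\rho(\delta(xy))=\beta(x)\rho(\delta(y))\beta(x)^{-1}\rho(\delta(x))$. On the other hand, $\delta(xy)=\delta(x)+\lambda_x(\delta(y))$, so
\[
\rho(\delta(xy))=\rho(\delta(x))\,\rho(\lambda_x(\delta(y))).
\]
Equating the two expressions and multiplying on the left by $\rho(\delta(x))^{-1}$ gives
\[
\rho(\lambda_x(\delta(y)))=\rho(\delta(x))^{-1}\beta(x)\rho(\delta(y))\beta(x)^{-1}\rho(\delta(x))=g_x\,\rho(\delta(y))\,g_x^{-1}.
\]
Since $\delta$ is bijective, every $k\in K$ equals $\delta(y)$ for some $y\in C$. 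Hence $\rho(\lambda_c(k))=g_c\rho(k)g_c^{-1}$ for all $c\in C$ and $k\in K$.

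It follows that conjugation by $g_c$ maps $\rho(K)$ into $\rho(K)$, since $\lambda_c(k)\in K$. It is injective, being a conjugation in $\GL(V)$, and surjective onto $\rho(K)$ because $\lambda_c$ is surjective on $K$. So its restriction $\beta'_c$ lies in $\Aut(\rho(K))$ and satisfies $\beta'_c\circ\rho=\rho\circ\lambda_c$.

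The only delicate point is that $g_c$ itself need not lie in $\rho(K)$, nor normalise it a priori. We avoid this by taking $\beta'_c$ to be the restriction of conjugation to $\rho(K)$, whose invariance follows from the identity above rather than from any property of $g_c$. We do not need $c\mapsto g_c$ to be a homomorphism into $\GL(V)$; Proposition~\ref{prop-br-semirep-equi} already guarantees that $\beta'$ is a homomorphism into $\Aut(\rho(K))$.
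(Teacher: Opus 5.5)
Your proposal is correct and follows essentially the paper's argument. Both proofs obtain $\rho(\lambda_c(\delta(c')))=\rho(\delta(c))^{-1}\beta(c)\rho(\delta(c'))\beta(c)^{-1}\rho(\delta(c))$ by combining the ZLV identity with $\delta(cc')=\delta(c)+\lambda_c(\delta(c'))$. The only difference is cosmetic: the paper reads off the $\lambda$-invariance of $\ker\rho$ (Condition~\ref{en-semirep-equi-2} of Proposition~\ref{prop-br-semirep-equi}) and then identifies the beta map, whereas you verify Condition~\ref{en-semirep-equi-1} directly and invoke uniqueness, and you also check explicitly that conjugation by $\rho(\delta(c))^{-1}\beta(c)$ preserves $\rho(K)$, which the paper leaves implicit.
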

\begin{proof}
Let $c$, $c'\in C$. Then 
\begin{align*}
\rho({\lambda_c(\delta(c'))})&=\rho(\delta(c)^{-1}\delta(cc'))=\rho(\delta(c)^{-1}\delta(cc')\delta(c)^{-1}\delta(c))\\&=\rho(\delta(c))^{-1}\rho(\delta(cc')\delta(c)^{-1})\rho(\delta(c))\\&=\rho(\delta(c))^{-1}\beta(c)\rho(\delta(c'))\beta(c)^{-1}\rho(\delta(c)).
\end{align*}
If $k\in\ker\rho$, then $\rho(\lambda_c(k))=1$ for all $c\in C$. Therefore, $\ker\rho$ is lambda-invariant, which means that it is a strong left ideal. Hence, $\rho$ is a brace semi-representation of $B$. Let $\bar\beta\colon C\longrightarrow\Aut(\rho(K))$ be its beta map. Then
$\bar\beta_c(\rho(k))=\rho(\lambda_c(k))$ that, as stated before, corresponds to the conjugation of $\rho(k)$ by $\rho(\delta(c))^{-1}\beta(c)$.
\end{proof}

There is no converse of Proposition~\ref{prop-ZLV-rep-ind-br-semi-rep}, in other words, not every brace semi-representation comes from a \ZLV-representation, as we will see in Example~\ref{ex-br-semi-rep-not-ZLV-rep} below. Therefore, we cannot make an ``inverse'' construction. 
Another problem is that there exist brace semi-representations that come from multiple \ZLV-representations, see Example~\ref{ex-br-semi-rep-var-ZLV-rep}.

\begin{ex}\label{ex-br-semi-rep-not-ZLV-rep}
Consider the left brace of Example~\ref{ex-ZLV-rep-not-br-rep} and consider the brace representation $\rho\colon K\longrightarrow\GL(\F_7^2)$ given by
\[\rho(x)=\begin{bmatrix}
0&1\\1&2
\end{bmatrix}^x\]
for $0\le x\le 5$.
If $\bar\beta\colon C\longrightarrow\Aut(\rho(K))$ is the beta map, we have that $\bar\beta_{1}(\rho(x))=\rho(x)^{-1}$ for $0\le x\le 5$.

Let suppose that $\rho$ comes from a \ZLV-representation $(\rho,\beta)$. Then, by Proposition~\ref{prop-ZLV-rep-ind-br-semi-rep}, $\bar\beta_1$ must be a conjugation by some element of $\F_7^2$ restricted to $\rho(K)$. Therefore,
\[\rho(1)=\begin{bmatrix}
0&1\\1&2\\
\end{bmatrix}\text{ and }\bar\beta_1({\rho(1)})=\rho(1)^{-1}=\begin{bmatrix}
5&1\\1&0\\
\end{bmatrix}\]
are conjugate matrices, that is, similar matrices, which is false, as they have different characteristic polynomials. Therefore, $\rho$ is a brace representation that does not come from a \ZLV-representation. 
\end{ex}

\begin{ex}\label{ex-br-semi-rep-var-ZLV-rep}
Consider the trivial skew left brace $B$ associated with $C_2=\gen{x}$. Let $\rho$, $\beta$, $\beta'\colon C_2\longrightarrow\GL(\F_2^2)$ be given by
\[\rho(x)=\begin{bmatrix}
1&0\\1&1
\end{bmatrix},\quad \beta(x)=\begin{bmatrix}
1&0\\0&1
\end{bmatrix},\quad \beta'(x)=\begin{bmatrix}
1&0\\1&1
\end{bmatrix}.\]
It is easy to check that $(\rho,\beta)$ and $(\rho,\beta')$ are \ZLV-representations of~$B$.
\end{ex}

Let us see how \ZLV-representations are related to the trifactorised groups.

\begin{prop}\label{prop-3fact-rep-ind-ZLV}
  Let $B$ be a finite skew left brace with additive group $K$ and multiplicative group~$C$. Let $(G, K, H, E)$ be a trifactorised group associated with~$B$. Let $\delta\colon C\longrightarrow K$ be the corresponding bijective derivation and let $\eta\colon C\longrightarrow E$ be the associated group epimorphism. Let $\sigma\colon C\longrightarrow H$ given by $\sigma(c)=\delta(c)\eta(c)$. Let $\bar{\bar\rho}\colon G\longrightarrow\GL(V)$ be a group representation, $\rho=\bar{\bar\rho}|_K$, and $\beta=\bar{\bar\rho}|_H\circ\sigma$. Then $(\rho,\beta)$ is a \ZLV-representation. 
\end{prop}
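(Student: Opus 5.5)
The plan is to push everything into the group $G$ and use that $\bar{\bar\rho}$ is a group homomorphism on all of $G$. Two things must be checked. First, $\rho$ and $\beta$ are group representations of $K$ and $C$ respectively. Second, the compatibility identity of Definition~\ref{defi-ZLV} holds.

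For $\rho=\bar{\bar\rho}|_K$ there is nothing to prove. For $\beta=\bar{\bar\rho}|_H\circ\sigma$ it suffices to show that $\sigma$ is a group homomorphism $C\longrightarrow H$. From the construction of trifactorised groups associated with $B$ in \cite{BallesterEstebanPerezAPerezC25-arXiv-categoriesskewleftbraces} I will use three facts:
\begin{itemize}
\item $H=\{\delta(c)\eta(c)\mid c\in C\}$, so $\sigma$ indeed takes values in $H$;
\item the lambda action is realised by conjugation, $\lambda_c(k)=\eta(c)k\eta(c)^{-1}$, exactly as in the proofs of Propositions~\ref{prop-ind-3fact-rep-beta} and~\ref{prop-3fact-to-rep-Br-rep};
\item the cocycle identity $\delta(cd)=\delta(c)+\lambda_c(\delta(d))$, which written multiplicatively inside $G$ reads $\delta(cd)=\delta(c)\eta(c)\delta(d)\eta(c)^{-1}$.
\end{itemize}
With these facts, and since $\eta$ is a homomorphism,
\[\sigma(cd)=\delta(c)\eta(c)\delta(d)\eta(c)^{-1}\eta(c)\eta(d)=\sigma(c)\sigma(d).\]
Hence $\beta$ is a composition of homomorphisms.

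For the compatibility identity I compute the right-hand side in $G$ before applying $\bar{\bar\rho}$. For $x,y\in C$,
\[\beta(x)\rho(\delta(y))\beta(x)^{-1}\rho(\delta(x))=\bar{\bar\rho}\bigl(\sigma(x)\delta(y)\sigma(x)^{-1}\delta(x)\bigr).\]
Substituting $\sigma(x)=\delta(x)\eta(x)$ turns the argument into
\[\delta(x)\eta(x)\delta(y)\eta(x)^{-1}\delta(x)^{-1}\delta(x)=\delta(x)\eta(x)\delta(y)\eta(x)^{-1}.\]
By the cocycle identity this equals $\delta(xy)$, so the right-hand side is $\rho(\delta(xy))$, as required.

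The only real obstacle is bookkeeping the conventions of the cited construction. I must make sure that $H$ is parametrised exactly as $\delta(c)\eta(c)$, and not as $\eta(c)\delta(c)$ or with inverses, so that $\sigma$ is a homomorphism rather than an anti-homomorphism. I must also make sure that the lambda action is conjugation by $\eta(c)$ rather than by $\eta(c)^{-1}$. I will first confirm both conventions against Proposition~\ref{prop-3fact-to-rep-Br-rep}, which already uses $\lambda_c(k)=\eta(c)k\eta(c)^{-1}$. Once they are fixed, the rest is the two short computations above.
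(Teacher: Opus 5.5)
Your proof is correct and follows the paper's own argument. The paper also rewrites $\delta(xy)$ inside $G$ as $\delta(x)\eta(x)\delta(y)\eta(x)^{-1}$ and uses $\eta(x)^{-1}=\sigma(x)^{-1}\delta(x)$ before applying $\bar{\bar\rho}$, only starting from the left-hand side instead of the right; your explicit check that $\sigma$ is a homomorphism, so that $\beta$ really is a representation of $C$, is a detail the paper leaves implicit.
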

\begin{proof}
Given $x$, $y\in C$, we have that
\begin{align*}
\rho(\delta(xy))&=\rho({\delta(x)\eta(x)\delta(y)\eta(x)^{-1}})=\bar{\bar\rho}({\delta(x)\eta(x)})\rho(\delta(y))\bar{\bar\rho}(\eta(x))^{-1}\\&=\beta(x)\rho(\delta(y))\bar{\bar\rho}(\delta(x)\eta(x))^{-1}\rho(\delta(x))=\beta(x)\rho(\delta(y))\beta(x)^{-1}\rho(\delta(x)).
\end{align*}
Hence, $(\rho,\beta)$ is a \ZLV-representation.
\end{proof}

A somehow converse of Proposition~\ref{prop-3fact-rep-ind-ZLV} can be attained. If $(\rho,\beta)$ is a \ZLV-representation of $B$, then, by Proposition~\ref{prop-ZLV-rep-ind-br-semi-rep} we have that $\rho$ is a brace semi-representation of $B$. Then we can induce it to a trifactorised group $(G,K,H,E)$ associated with~$B$ as in Proposition~\ref{prop-ind-3fact-rep-exist}. Therefore, given a \ZLV-representation, we can induce a trifactorised representation. But in the case of the large trifactorised group (as in~\cite{BallesterEsteban22}, see also \cite[Section~2]{BallesterEstebanPerezAPerezC25-arXiv-categoriesskewleftbraces}) we obtain a more interesting result.

\begin{prop}\label{prop-ZLV-ind-rep-large}
Let $\fL(B,+,\cdot)=([K]C,K,D,C)$ be the large trifactorised group associated with $B$. Consider a \ZLV-representation $(\rho,\beta)$ of~$B$ over a vector space $V$. Then $\bar{\bar\rho}\colon [K]C\longrightarrow \GL(V)$ given by $\bar{\bar\rho}(kc)=\rho(k\delta(c)^{-1})\beta(c)$ for $k\in K$ and $c\in C$ is a group representation of $[K]C$. 
\end{prop}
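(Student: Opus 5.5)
The plan is to verify the homomorphism property directly, using the multiplication in $[K]C$ and two identities that come from the \ZLV-condition. Elements of $[K]C$ have a unique expression $kc$ with $k\in K$ and $c\in C$, so $\bar{\bar\rho}$ is a well-defined map. The semidirect product is taken with respect to the lambda action, so $ckc^{-1}=\lambda_c(k)$. Writing the group $K$ multiplicatively inside $[K]C$, this gives
\[(kc)(k'c')=k\,\lambda_c(k')\,cc'.\]
Hence it suffices to prove
\[\rho\bigl(k\lambda_c(k')\delta(cc')^{-1}\bigr)\beta(cc')=\rho(k\delta(c)^{-1})\beta(c)\rho(k'\delta(c')^{-1})\beta(c')\]
for all $k,k'\in K$ and $c,c'\in C$. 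Once this holds, $\bar{\bar\rho}(1)=\rho(\delta(1))^{-1}\beta(1)=1$, so $\bar{\bar\rho}$ is a group representation.

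\textbf{The two identities.} The first comes from the computation in the proof of Proposition~\ref{prop-ZLV-rep-ind-br-semi-rep}:
\[\rho(\lambda_c(k'))=\rho(\delta(c))^{-1}\beta(c)\rho(k')\beta(c)^{-1}\rho(\delta(c)).\]
That proof states it for $k'=\delta(c'')$, but $\delta$ is bijective, so it holds for every $k'\in K$. The second is Definition~\ref{defi-ZLV} itself, inverted:
\[\rho(\delta(cc'))^{-1}=\rho(\delta(c))^{-1}\beta(c)\rho(\delta(c'))^{-1}\beta(c)^{-1}.\]

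\textbf{The computation.} Since $\rho$ is a homomorphism on $K$, the left-hand side splits as $\rho(k)\,\rho(\lambda_c(k'))\,\rho(\delta(cc'))^{-1}\,\beta(c)\beta(c')$, using also that $\beta$ is a homomorphism. Substitute the two identities. The middle factors $\rho(\delta(c))\rho(\delta(c))^{-1}$ cancel, and $\beta(c)^{-1}\beta(c)$ cancels against the leading factor of $\beta(cc')=\beta(c)\beta(c')$. What remains is
\[\rho(k)\rho(\delta(c))^{-1}\beta(c)\rho(k')\rho(\delta(c'))^{-1}\beta(c'),\]
which is exactly the right-hand side.

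The main point needing care is the extension of the first identity from elements $\delta(c'')$ to arbitrary $k'\in K$, which uses bijectivity of $\delta$. Apart from that, one must keep track consistently of the multiplicative notation for $K$ inside $[K]C$ and of the order of factors, since $K$ need not be abelian. The rest is routine cancellation.
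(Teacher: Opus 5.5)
Your proposal is correct and is essentially the paper's own proof. Like the paper, you verify multiplicativity directly via $(kc)(k'c')=k\lambda_c(k')cc'$, using two identities: the conjugation formula for $\rho(\lambda_c(k'))$ from Proposition~\ref{prop-ZLV-rep-ind-br-semi-rep}, extended to all of $K$ by bijectivity of $\delta$ (which the paper does tacitly), and the inverted \ZLV-identity for $\rho(\delta(cc'))^{-1}$. The paper applies these two identities one after the other rather than substituting both at once, and your account of the cancellations is a bit compressed (there are two $\beta(c)^{-1}\beta(c)$ cancellations), but the resulting expression is right.
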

\begin{proof}
  First, note that, by Proposition~\ref{prop-ZLV-rep-ind-br-semi-rep},
  \[\rho(\lambda_c(k))=\rho(\delta(c))^{-1}\beta(c)\rho(k)\beta(c)^{-1}\rho(\delta(c))\]
  for $k\in K$ and $c\in C$. Therefore, given $k_1$, $k_2\in K$ and $c_1$, $c_2\in C$,
\begin{align*}
  \bar{\bar\rho}({({k_1c_1})({k_2c_2})})&=\bar{\bar\rho}({k_1\lambda_{c_1}(k_2)c_1c_2})=\rho({k_1\lambda_{c_1}(k_2)\delta(c_1c_2)^{-1}})\beta(c_1c_2)\\&=\rho({k_1\lambda_{c_1}(k_2)})\rho({\delta(c_1c_2)})^{-1}\beta(c_1)\beta(c_2)\\&=\rho({k_1\lambda_{c_1}(k_2)})({\beta(c_1)\rho(\delta(c_2))\beta(c_1)^{-1}\rho(\delta(c_1))})^{-1}\beta(c_1)\beta(c_2)\\&=\rho({k_1\lambda_{c_1}(k_2)})\rho(\delta(c_1))^{-1}\beta(c_1)\rho(\delta(c_2))^{-1}\beta(c_2)\\&=\rho({k_1}) \rho(\delta(c_1))^{-1}\beta(c_1)\rho(k_2))\rho(\delta(c_2))^{-1}\beta(c_2)\\&=\rho(k_1\delta(c_1)^{-1})\beta(c_1)\rho(k_2\delta(c_2)^{-1})\beta(c_2)=\bar{\bar\rho}(k_1c_1)\bar{\bar\rho}(k_2c_2).
\end{align*}
Hence, $\bar{\bar\rho}$ is a group representation of $[K]C$.
\end{proof}
\begin{remark}
For the large trifactorised groups, the constructions of Propositions~\ref{prop-3fact-rep-ind-ZLV} and~\ref{prop-ZLV-ind-rep-large} are inverses of each other. Therefore, \ZLV-representations of a skew left brace are equivalent to group representations of the large trifactorised group associated with the skew left brace.
\end{remark}

In \cite[Definition~4.12]{LetourmyVendramin24}, the authors define the complex \emph{twisted skew brace algebra} of a skew left brace $B$ with respect to a $2$-cocycle $(\alpha, \mu)$ of a skew left brace. They proved in \cite[Proposition~4.19]{LetourmyVendramin24} that complex projective representations (defined with image in $\PGL_n(\C)$ instead of $\GL_n(\C)$) associated with the parameters $(\alpha, \mu)$ are in a bijective correspondence with some modules for this algebra. Our linear representations correspond to the case of the $2$-cocycle $(\alpha, \mu)$ with $\alpha$, $\mu$ the constant maps with image~$1$. Since we are interested in ordinary linear representations, with images in $\GL_n(\C)$, we will consider only this case.

\begin{defi}[see {\cite[Definition~4.12]{LetourmyVendramin24}}]
  Let $B$ be a skew left brace and consider the vector space $\C B$ with basis $\bar B=\{e_b\mid b\in B\}$. For $a$, $b\in B$, let us define
\[e_a\oplus e_b=e_{a+b},\qquad e_a\odot e_b=e_{ab}\]
and extend these operations bilinearly on $\C B$. Then $\oplus$ and $\odot$ give two algebra structures to $\C B$, where $\ominus e_b=e_{-b}$ is the symmetric element of $e_b$ for $\oplus$ and $(e_b)^{\odot}=e_{b^{-1}}$ is the symmetric element of $e_b$ for $\odot$. This structure will be called the \emph{twisted skew brace algebra} of~$B$, and will be denoted as $\tsba{\C B}$. Furthermore, $\xi\colon B\longrightarrow \bar B$ given by $\xi(b)=e_b$ for $b\in B$ can be regarded as a group homomorphism from $(B, {+})$ to $(\bar B, {\oplus}|_{\bar B\times \bar B})$ and from $(B, {\cdot})$ to $(\bar B, {\odot}|_{\bar B\times \bar B})$, where ${\oplus}|_{\bar B\times \bar B}$ and ${\odot}|_{\bar B\times \bar B}$ represent the restrictions of $\oplus$ and $\odot$, respectively, to $\bar B\times \bar B$.
\end{defi}
\begin{defi}[see {\cite[Definition~4.15]{LetourmyVendramin24}}]
A \emph{module} over the twisted skew brace algebra $\tsba{\C B}$ is a $\C$-vector space $V$ together with two algebra homomorphisms
\[\hat\rho\colon (\tsba{\C B}, {+}, {\oplus})\longrightarrow \End(V),\qquad
  \hat\beta\colon (\tsba{\C B}, {\cdot}, {\odot})\longrightarrow \End(V)\]
satisfying the relation
\begin{equation}
  \hat\rho(e_a\odot e_b)=\hat\beta(e_a)\hat\rho(e_b)\hat\beta(e_a)^{-1}\hat\rho(e_a)\label{eq-module}
\end{equation}
for all $a$, $b\in B$.
\end{defi}
For every $a\in B$, since $e_a$ has symmetric elements for $\oplus$ and $\odot$, we have that $\rho(e_a)$ and $\beta(e_a)$ are invertible matrices. 
The composition of the restrictions $\hat\rho|_{\bar B}$ of $\hat \rho$ to $\bar B$ and $\hat\beta|_{\bar B}$ of $\hat \beta$ to $\bar B$ satisfy that $\rho=\hat\rho|_{\bar B}\circ \xi\colon (B, {+})\longrightarrow \GL(V)$ and $\beta=\hat \beta|_{\bar B}\circ \xi\colon (B, {\cdot})\longrightarrow \GL(V)$ are group homomorphisms. In other words, $\rho$ and $\beta$ are group representations of $(B, {+})$ and $(B, {\cdot})$, respectively. Moreover, Equation~\eqref{eq-module} can be rewritten as
\[\rho(ab)=\beta(a)\rho(b)\beta(a)^{-1}\rho(a)\]
for every $a$, $b\in B$. By fixing a basis on $V$, we can identify $\GL(V)$ with $\GL_n(\C)$ for $n=\dim_{\C}(V)$. It follows that $(\rho, \beta)$ is a \ZLV-representation of $B$ with representation space $V$. By Proposition~\ref{prop-ZLV-rep-ind-br-semi-rep}, $\rho$ is a brace semi-representation of~$B$ and, by Proposition~\ref{prop-ZLV-ind-rep-large}, $\bar{\bar \rho}\colon [K]C\longrightarrow \GL(V)$ given by $\bar{\bar \rho}(kc)=\rho(k\delta(c)^{-1})\beta(c)$ for $k\in K=(B,{+})$ and $c\in C=(B, {\cdot})$ is a group representation of $G=[K]C$. The arguments of \cite[Chapter~B, Section~3]{DoerkHawkes92} show:
\begin{prop}\label{prop-ZLV-large}
  Every module over the twisted skew brace algebra $\tsba{\C B}$ of a finite skew left brace $B$ can be regarded as a module over the semidirect product $G=[K]C$ of the additive group $K=(B, {+})$ and the multiplicative group $C=(B, {\cdot})$.
\end{prop}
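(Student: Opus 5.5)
The plan is to chain the preceding discussion into an explicit module structure. Let $V$ be a module over $\tsba{\C B}$ with structure maps $\hat\rho$ and $\hat\beta$. First, I restrict to the basis $\bar B$ and compose with $\xi$, obtaining $\rho=\hat\rho|_{\bar B}\circ\xi\colon K\longrightarrow\GL(V)$ and $\beta=\hat\beta|_{\bar B}\circ\xi\colon C\longrightarrow\GL(V)$. Each $\hat\rho(e_a)$ and $\hat\beta(e_a)$ is invertible, because $e_a$ has inverses for $\oplus$ and $\odot$. Since $\hat\rho$ and $\hat\beta$ are multiplicative for $\oplus$ and $\odot$ respectively, $\rho$ and $\beta$ are group homomorphisms. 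Equation~\eqref{eq-module} on basis elements then reads $\rho(ab)=\beta(a)\rho(b)\beta(a)^{-1}\rho(a)$. Because $\delta$ is the identity map, this is exactly the condition of Definition~\ref{defi-ZLV}, so $(\rho,\beta)$ is a \ZLV-representation.

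Second, I apply Proposition~\ref{prop-ZLV-ind-rep-large}. This gives the group representation $\bar{\bar\rho}\colon [K]C\longrightarrow\GL(V)$ with $\bar{\bar\rho}(kc)=\rho(k\delta(c)^{-1})\beta(c)$. The group $G=[K]C$ is the semidirect product via the lambda action, and it is finite because $B$ is finite.

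Third, I pass from a group representation to a module. Following \cite[Chapter~B, Section~3]{DoerkHawkes92}, I extend $\bar{\bar\rho}$ linearly to an algebra homomorphism $\C G\longrightarrow\End(V)$, and I define $g\cdot v=\bar{\bar\rho}(g)(v)$. This makes $V$ a $\C G$-module, that is, a $G$-module.

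For compatibility, I check that the restrictions recover the original data. On $K$ we have $\bar{\bar\rho}(k)=\rho(k)$, since $\delta(1)=0$ and $\beta(1)=1$. On $C$, $\bar{\bar\rho}(c)=\rho(\delta(c))^{-1}\beta(c)$, so $\beta(c)=\rho(\delta(c))\,\bar{\bar\rho}(c)$. Hence $\hat\rho$ and $\hat\beta$ are recovered from the $G$-action by linearity over the bases $\bar B$. This shows that the module is genuinely ``regarded'' as a $G$-module, with no information lost.

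The only step needing care is the bookkeeping of the identification. The $\oplus$-algebra structure on $\C B$ is the group algebra $\C K$ and the $\odot$-structure is $\C C$, so algebra homomorphisms from them correspond to representations of $K$ and $C$. Checking this, and the recovery of $\hat\beta$ just described, is routine, since the real content is already in Proposition~\ref{prop-ZLV-ind-rep-large}.
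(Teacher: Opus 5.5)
Your proposal is correct and follows essentially the same route as the paper. You extract the \ZLV-representation $(\rho,\beta)$ from $(\hat\rho,\hat\beta)$, apply Proposition~\ref{prop-ZLV-ind-rep-large} to obtain $\bar{\bar\rho}$ on $[K]C$, and pass to a $\C G$-module as in Doerk--Hawkes; your extra check that $\rho=\bar{\bar\rho}|_K$ and $\beta(c)=\bar{\bar\rho}(\delta(c)c)$ is a small bonus, in line with the paper's remark that the two constructions are mutually inverse.
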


\section*{Acknowledgements}
This work has been supported by the grant PID2024-159495NB-I00, funded by MICIU/AEI/10.13039/501100011033 and by ERDF/EU, and by the grant CIAICO/2023/007 from the
Conselleria d’Educació, Cultura, Universitats i Ocupació, of the Generalitat Valenciana.

The authors declare that there is no relevant competing interest.
\bibliographystyle{plain}
\bibliography{bibgroup}

@preamble{"
\providecommand{\iflanguage}[3]{#3}\relax
\providecommand{\mathfrak}{\mathcal}
\providecommand{\acceptedin}{\iflanguage{spanish}{Aceptado en}{Accepted in}}
\providecommand{\accepted}{\iflanguage{spanish}{Aceptado}{Accepted}}
\providecommand{\talkat}{\iflanguage{spanish}{Charla en}{Talk at}}
\providecommand{\plenarytalkat}{\iflanguage{spanish}{Charla plenaria en}{Plenary talk at}}
\providecommand{\JIF}{JIF}
\providecommand{\PhDcourseat}{\iflanguage{spanish}{Curso de doctorado en}{PhD course at}}
\providecommand{\transrus}{\iflanguage{spanish}{Traducci{\'o}n del art{\'\i}culo original en ruso en}{Translation of the original paper in Russian from}}
\providecommand{\supervisedby}{\iflanguage{spanish}{Direcci{\'o}n:}{Supervised by}}
\providecommand{\visited}{\iflanguage{spanish}{visitada}{visited}}
\providecommand{\inpress}{\iflanguage{spanish}{en prensa}{\iflanguage{catalan}{en premsa}{in press}}}
\providecommand{\encurs}{\iflanguage{spanish}{en curso}{\iflanguage{catalan}{en curs}{in course}}}
\providecommand{\aand}{\iflanguage{spanish}{y}{and}}
\providecommand{\invitedtalkat}{\iflanguage{spanish}{Conferencia invitada en}{Invited talk at}}
\providecommand{\invitedtalkatandorganiser}{\iflanguage{spanish}{Organizador y conferencia invitada en}{Organiser and invited talk at}}
\providecommand{\posterat}{\iflanguage{spanish}{Póster en}{Poster at}}
\providecommand{\invitedtalk}{\iflanguage{spanish}{Conferencia invitada}{Invited talk}}
\providecommand{\oral}{\iflanguage{spanish}{Comunicaci{\'o}n oral}{Oral communication}}
\providecommand{\Poland}{\iflanguage{spanish}{Polonia}{Poland}}
\providecommand{\Germany}{\iflanguage{spanish}{Alemania}{Germany}}
\providecommand{\Italy}{\iflanguage{spanish}{Italia}{Italy}}
\providecommand{\Ireland}{\iflanguage{spanish}{Irlanda}{Ireland}}
\providecommand{\Hungary}{\iflanguage{spanish}{Hungr{\'\i}a}{Hungary}}
\providecommand{\Portugal}{\iflanguage{spanish}{Portugal}{Portugal}}
\providecommand{\Cairo}{\iflanguage{spanish}{El Cairo, Egipto}{Cairo, Egypt}}
\providecommand{\Bath}{\iflanguage{spanish}{Bath, Inglaterra, Reino Unido}{Bath, England, United Kingdom}}
\providecommand{\presentada}[2]{\iflanguage{spanish}{presentada por #1}{presented by #2}}
\providecommand{\Ponenciaplenaria}{\iflanguage{spanish}{Ponencia plenaria}{Plenary talk}}
\providecommand{\Naples}{\iflanguage{spanish}{N{\'a}poles}{Naples}}
\def\cprime{$'$}
\providecommand{\germ}{\mathfrak}
\providecommand{\url}{\texttt}"}

@string{and = { and }}

@string{berlin = {Berlin, Heidelberg, New York}}

@string{grund = {Grund.\ Math.\ Wiss.}}

@string{jalg = {J.\ Algebra}}

@string{springer = {Springer Verlag}}

@Article{BallesterEsteban22,
  author =	 {A. Ballester-Bolinches and R. Esteban-Romero},
  title =	 {Triply factorised groups and the structure of skew
                  left braces},
  journal =	 {Commun. Math. Stat.},
  year =	 2022,
  volume =	 10,
  month =	 oct,
  pages =	 {353--370},
  doi =		 {10.1007/s40304-021-00239-6},
  impact =	 {SCI {\JIF} 0,868 (2021, MATHEMATICS 193/333,
                  Q3).{\newblock}Scopus CiteScore 2,7 (2021, Applied
                  Mathematics 82/250, Q2; Computational Mathematics
                  72/167, Q2; Statistics and Probability 82/250,
                  Q2).{\newblock}Scopus SJR 0,725 (2021, Applied
                  Mathematics 185/550, Q2; Computational Mathematics
                  53/154, Q2; Statistics and Probability 90/248, Q2).}
}

@unpublished{BallesterEstebanPerezAPerezC25-arXiv-categoriesskewleftbraces,
  title =	 {Categories of skew left braces and trifactorised
                  groups},
  author =	 {A. Ballester-Bolinches and R. Esteban-Romero and
                  P. Pérez-Altarriba and V. Pérez-Calabuig},
  year =	 2025,
  eprint =	 {2501.16089},
  archivePrefix ={arXiv},
  primaryClass = {math.GR},
  doi =		 {10.48550/arXiv.2501.16089},
  url =		 {https://arxiv.org/abs/2501.16089},
  note =	 {Preprint arXiv:2501.16089}
}

@Article{Baxter73,
  author =	 {R. Baxter},
  title =	 {Eight-vertex model in lattice statistics and
                  one-dimensional anisotropic {H}eisenberg
                  chain. {I}. {S}ome fundamental eigenvectors},
  journal =	 {Ann.\ Physics},
  year =	 1973,
  volume =	 76,
  number =	 1,
  month =	 mar,
  pages =	 {1--24},
  doi =		 {10.1016/0003-4916(73)90439-9}
}

@book{DoerkHawkes92,
  AUTHOR =	 {Doerk, K. and Hawkes, T.},
  TITLE =	 {Finite soluble groups},
  SERIES =	 {De Gruyter Expositions in Mathematics},
  VOLUME =	 4,
  PUBLISHER =	 {Walter de Gruyter \& Co.},
  address =	 {Berlin},
  YEAR =	 1992,
  PAGES =	 {xiv+891},
  ISBN10 =	 {3-11-012892-6},
  MRCLASS =	 {20D10 (20F17)},
  MRNUMBER =	 1169099,
  MRREVIEWER =	 {R. Bryce},
  DOI =		 {10.1515/9783110870138},
  isbn =	 9783110870138,
  URL =		 {https://doi.org/10.1515/9783110870138},
}

@Article{GuarnieriVendramin17,
  author =	 {L. Guarnieri and L. Vendramin},
  title =	 {Skew-braces and the {Y}ang-{B}axter equation},
  journal =	 {Math.\ Comp.},
  year =	 2017,
  volume =	 86,
  number =	 307,
  month =	 mar,
  pages =	 {2519--2534},
  MRCLASS =	 {16T25 (81R50)},
  MRNUMBER =	 3647970,
  doi =		 {10.1090/mcom/3161}
}

@book{Huppert67,
  address =	 berlin,
  author =	 {B. Huppert},
  publisher =	 springer,
  series =	 grund,
  title =	 {{E}ndliche {G}ruppen {I}},
  volume =	 134,
  year =	 1967,
  isbn =	 {978-3-642-64982-0},
  pages =	 {xii+793},
  MRCLASS =	 {20.25},
  mrnumber =	 {MR0224703 (37 {\#}302)},
  doi =		 {10.1007/978-3-642-64981-3}
}

@Book{Kurosh74,
  author =	 {A. G. Kurosh},
  title =	 {General algebra. Lectures of the 1969--1970 academic
                  year},
  publisher =	 {Moscow University Publishing House},
  year =	 1974,
  address =	 {Moscow},
  note =	 {In Russian}
}

@Article{LetourmyVendramin24,
  author =	 {T. Letourmy and L. Vendramin},
  title =	 {{S}chur covers of skew braces},
  journal =	 jalg,
  volume =	 644,
  year =	 2024,
  month =	 apr,
  pages =	 {609--654},
  doi =		 {10.1016/j.jalgebra.2024.01.021}
}

@article{Rump07,
  author =	 {W. Rump},
  journal =	 {J. Algebra},
  pages =	 {153-170},
  title =	 {Braces, radical rings, and the quantum
                  {Y}ang-{B}axter equation},
  volume =	 307,
  doi =		 {10.1016/j.jalgebra.2006.03.040},
  MRCLASS =	 {16Y99 (16W30)},
  MRNUMBER =	 2278047,
  url =		 {https://doi.org/10.1016/j.jalgebra.2006.03.040},
  year =	 2007,
  month =	 jan
}

@InProceedings{Sysak11-Ischia10,
  author =	 {Ya. P. Sysak},
  title =	 {The adjoint group of radical rings and related
                  questions},
  booktitle =	 {Ischia Group Theory 2010.  Proceedings of the
                  Conference, Ischia, Naples, Italy, 14 – 17 April
                  2010},
  year =	 2011,
  editor =	 {M. Bianchi and P. Longobardi and M. Maj and
                  C. M. Scoppola},
  pages =	 {344--365},
  month =	 sep,
  address =	 {Singapore},
  publisher =	 {World Scientific},
  doi =		 {10.1142/9789814350051_0027}
}

@Article{Vendramin19-agta,
  author =	 {L. Vendramin},
  title =	 {Problems on skew left braces},
  journal =	 {Adv. Group Theory Appl.},
  year =	 2019,
  volume =	 7,
  pages =	 {15--37},
  doi =		 {10.32037/agta-2019-003}
}

@Article{Yang67,
  author =	 {C. N. Yang},
  title =	 {Some exact results for many-body problem in one
                  dimension with repulsive delta-function interaction},
  journal =	 {Phys. Rev. Lett},
  year =	 1967,
  volume =	 19,
  issue =	 23,
  month =	 dec,
  pages =	 {1312--1315},
  MRCLASS =	 {81.20},
  MRNUMBER =	 261870,
  doi =		 {10.1103/PhysRevLett.19.1312},
  url =		 {https://doi.org/10.1103/PhysRevLett.19.1312}
}

@Article{Zhu22,
  author =	 {H. Zhu},
  title =	 {The construction of braided tensor categories from
                  {H}opf braces},
  journal =	 {Linear Multilinear Algebra},
  year =	 2022,
  volume =	 70,
  number =	 16,
  pages =	 {3171--3188},
  doi =		 {10.1080/03081087.2020.1828249}
}
\end{document}